\definecolor{green1}{RGB}{0,107,28}
\definecolor{green2}{RGB}{17,85,35}
\definecolor{green3}{RGB}{0,77,20}
\definecolor{green4}{RGB}{33,166,68}
\definecolor{green5}{RGB}{60,166,88}
\definecolor{blue1}{RGB}{3,56,91}
\definecolor{blue2}{RGB}{16,51,73}
\definecolor{blue3}{RGB}{1,40,66}
\definecolor{blue4}{RGB}{35,109,157}
\definecolor{blue5}{RGB}{59,118,157}
\DeclareFontFamily{OT1}{pzc}{}
\DeclareFontShape{OT1}{pzc}{m}{it}{<-> s * [1.200] pzcmi7t}{}
\DeclareMathAlphabet{\mathpzc}{OT1}{pzc}{m}{it}
\def\mathcat{\mathpzc}
\def\Alg{\mathcat{Alg}}
\def\kk{\mathfrak k}
\def\hh{\mathfrak h}
\def\mm{\mathfrak m}
\def\ll{\mathfrak l}
\def\zz{\mathfrak z}
\def\Ho{\mathrm H}
\def\gg{\mathfrak g}
\def\GG{G}
\def\TT{\mathrm T}
\def\UU{\mathrm U}
\def\RR{\mathbb R}
\def\ZZ{\mathbb Z}
\def\Hom{\mathrm{Hom}}
\def\empha{\em}
\def\bra{[\,\cdot\, , \,\cdot\,]}
\def\pbra{\{\,\cdot\, , \,\cdot\,\}}
\def\Ho{\mathrm{H}}
\def\CC{\mathbb C}
\def\AA{\mathcal A}
\def\Id{\mathrm{Id}}
\def\hinn{\langle\,\cdot\, , \,\cdot \,\rangle}
\def\Rep{\mathrm{Rep}}
\def\Ad{\mathrm{Ad}}
\def\an{\mathrm{an}}
\def\alg{\mathrm{alg}}
\def\ssi{\mathrm{ssimple}}
\def\OO{\mathcal O}
\def\hho{\hh^o}
\def\mmo{\mm^o}
\def\GG{G}
\def\Rr{\mu_\RR-\mathrm{ss}}
\def\KKK{K}
\newcommand{\group}{K}
\newcommand{\cgroup}{G}
\newcommand{\lieal}{\mathfrak k}
\newcommand{\clieal}{\mathfrak g}
\newtheorem{thm}{Theorem}[section]
\newtheorem{prop}[thm]{Proposition}
\newtheorem{defi}[thm]{Definition}
\newtheorem{rema}[thm]{Remark}
\long
\def\MSC#1\EndMSC{\def\arg{#1}\ifx\arg\empty\relax\else
      {\par\narrower\noindent
      2020 Mathematics Subject Classification. #1\par}\fi}
\long
\def\KEY#1\EndKEY{\def\arg{#1}\ifx\arg\empty\relax\else
    {\par\narrower\noindent
      Keywords and Phrases: #1\par}\fi}
\title[Finite construction of self-duality and related moduli spaces over a 
surface] 
{
Finite-dimensional construction of
self-duality and related moduli spaces over a closed Riemann surface
as stratified holomorphic symplectic spaces}
\author{Johannes Huebschmann  }
\address{
\newline
Universit\'e de Lille - Sciences et Technologies 
\\
D\'epartement de Math\'ematiques\\
\newline CNRS-UMR 8524,
Labex CEMPI (ANR-11-LABX-0007-01)
\\
\newline
59655 Villeneuve d'Ascq Cedex, France\\
\newline
Johannes.Huebschmann@univ-lille.fr
 }
\date{\today}
\numberwithin{equation}{section}
\begin{document}
\setcounter{page}{1}

\begin{abstract}
\noindent
In terms of appropriate extended moduli spaces,
we develop a finite-dimensional construction of the
self-duality and related moduli spaces over a closed Riemann surface
as stratified holomorphic symplectic spaces
by  singular finite-dimensional holomorphic symplectic reduction.
\end{abstract}

\maketitle

\medskip
\centerline
{Dedicated to the memory of Peter Slodowy.}
\medskip

\MSC 

\noindent
Primary: 53D30 

\noindent
Secondary: 14D21 14L24 14H60 32S60 53D17 53D20 53D50 58D27 81T13
\EndMSC

\KEY
Self-duality moduli space,
analytic representation variety,
holomorphic symplectic reduction,
stratified holomorphic symplectic space,
stratified hyperk\"ahler space, extended moduli space
 \EndKEY
{\tableofcontents}

\section{Introduction} 

In \cite{MR887284}, N. Hitchin constructed,
by infinite-dimensional methods,
the moduli spaces of the self duality equations
over a closed Riemann surface.
Hitchin showed that, away from the singularities,
such a moduli space acquires the structure of a hyperk\"ahler manifold.
Thus, in itself, such a moduli space is an interesting object, not only of 
Riemannian geometry but also of symplectic geometry.
In favorable cases, it
has no singularities,
and the resulting hyperk\"ahler manifold
also exhibits interesting fundamental algebro-geometric properties.

We here address  the issue of singularities seriously:
A hyperk\"ahler manifold
has an underlying
 holomorphic symplectic K\"ahler manifold;
the hyperk\"ahler constraint
is a somewhat special one on the K\"ahler structure.
We offer a description of the singularities
for a class of moduli spaces including those 
of the self duality equations
in the realm of what we call
{\em stratified holomorphic symplectic spaces\/},
a stratified holomorphic symplectic space
with a single stratum being
a holomorphic symplectic manifold.
We derive this description from a purely finite-dimensional
construction for these moduli spaces realized, according to the
{\em nonabelian Hodge\/} correspondence, as 
spaces 
of representations in a complex reductive Lie group 
of the fundamental group 
of the surface
and twisted versions thereof.
In particular, 
Simpson extended Hitchin's original result
by
showing that polystable Higgs bundles correspond to solutions 
of the self duality equations \cite {MR1307297, MR1320603} 
(Hitchin-Kobayashi correspondence for Higgs bundles)
and
Corlette  \cite{MR965220}
and Donaldson  \cite{MR887285}
established a correspondence between solutions
of the self-duality equations and representations of the fundamental group
(Hitchin-Kobayashi correspondence for complex connections).

The papers \cite{MR1460627}, \cite{MR1370113}, \cite {MR1277051}, 
\cite{MR1470732}, 
building on \cite{MR1112494}
and \cite{MR1362845},
settle a similar issue: a purely finite-dimensional construction
of the moduli spaces of semistable holomorphic vector bundles
on a Riemann surface, possibly punctured,
and of generalizations thereof
as stratified symplectic spaces in the sense of
\cite{MR1127479}, realized,
according to the {\em Hitchin-Kobayashi\/} correspondence
for principal bundles on a Riemann surface,
 as spaces of twisted representations
of the fundamental group in a compact Lie group.
The construction
proceeds by ordinary symplectic reduction applied to
a finite-dimensional {\em extended moduli space\/}
arising from a  product of $2\ell$ copies of the Lie group
(the group $\UU(n)$ for the case of holomorphic  rank $n$  vector bundles)
where $\ell$ is the genus of the surface or,
in the presence of punctures, from a variant thereof.
This structure depends on the  Lie group, 
a choice of an invariant inner product on its Lie algebra,
and the topology of a corresponding bundle, 
but is independent of any complex structure on $\Sigma$.

Here we proceed in the same way:
According to the nonabelian Hodge correspondence,
we construct analytic twisted representation 
varieties associated with  the fundamental group 
by holomorphic symplectic reduction
applied to
a finite-dimensional extended moduli space
arising from a  product of $2\ell$ copies of the corresponding
complexified  Lie group, a
complex reductive  Lie group.
Thus one can view such a twisted representation variety 
as a complexification of  a twisted representation 
space of the kind we explored in \cite{MR1370113};
since the polar decomposition and the inner product on the Lie algebra
determine a diffeomorphism between the total space of the real cotangent bundle
and the complexification of a compact Lie group, 
see Section \ref{twana} for details,
one can also view such a twisted representation variety
as the total space of a real cotangent bundle
(beware: in the presence of singularities the interpretation of the term 
cotangent bundle is not immediate)
of a twisted representation 
space of the kind we examined in \cite{MR1370113}.
Our main result, Theorem \ref{main}, says
that the complex structure of the Lie group,
a chosen invariant inner product on the Lie algebra,
and a certain additional ingredient which corresponds to the topology
of an associated bundle
determine a stratified holomorphic symplectic  structure
on a twisted representation variety
of the kind we study in this paper.
Our approach includes in particular
a new construction of
a {\em Betti\/} moduli space
(in the terminology of \cite {MR1307297, MR1320603})
as an analytic space
and puts a stratified holomorphic symplectic structure
on such a space.
To avoid confusion we note that 
the terminology in \cite {MR1307297, MR1320603} 
is \lq\lq character variety\rq\rq\ for our 
 \lq\lq representation variety\rq\rq.

To carry out the requisite holomorphic symplectic
reduction and to extract structural information
on the reduced level
we extend results due to Mayrand \cite{mayrand}.
Mayrand, in turn, 
builds on  
\cite {MR1127479},
\cite[Theorem 2.1]{MR1468352},
 an analytic 
version of the Kempf-Ness theorem
due to Heinzer-Loose  
\cite[Introduction \S 1.3  p.~289, \S 3.3 Theorem p.~295]{MR1274117},
and a 
 holomorphic slice theorem \cite[\S 2.7 Theorem p.~292]{MR1274117},
\cite[Theorem 1.12 p.~100]{MR1314032}.
In the etale world, this kind of slice theorem goes back
to \cite{MR0342523}.
Mayrand works exclusively with hyperk\"ahler manifolds, 
and in Section \ref{reduc} we show that
 his arguments apply to the more general setting in the present paper.
Accordingly, 
Theorems \ref{holopois2}, \ref{hyper1}, \ref{orbit},  \ref{hololoc},
\ref{orbit2}, \ref{poisson}, \ref{hyper4}, \ref{hyper3}
parallel or extend results in \cite{mayrand}. 
Mayrand's crucial technical result \cite[Theorem 1.3]{mayrand} is a 
{\em holomorphic symplectic slice theorem\/}
for hamiltonian hyperk\"ahler manifolds---it gives a local normal form
for a holomorphic momentum mapping formally exactly of the same kind
as the {\em Guillemin-Sternberg-Marle\/} 
local normal form of a real momentum mapping
 \cite{MR767835, MR753857, MR859857}---and
Theorem \ref{hololoc} extends this observation to a
{\em holomorphic symplectic slice theorem\/}
for hamiltonian holomorphic symplectic K\"ahler manifolds.
In the algebraic setting,
\cite[Theorem 3]{MR2230091} already
establishes such a symplectic slice theorem for
an algebraic hamiltonian action of a reductive group
on a non-singular affine symplectic variety.

Narasimhan-Seshadri constructed the moduli spaces
of semistable holomorphic vector bundles
by geometric invariant theory as normal projective varieties \cite{MR0184252},
see also \cite{newstboo},
and Atiyah-Bott obtained these spaces by infinite dimensional
methods and showed a smooth dense stratum of such a space
acquires a K\"ahler manifold structure \cite{MR702806}; 
thus when there is only one
such stratum, a Narasimhan-Seshadri moduli space 
becomes a K\"ahler manifold in a natural way.
However it may happen that such a moduli space
is non-singular as a projective variety
but still exhibits singularities as a stratified symplectic space
in the sense that it has more than one stratum.
This happens, e.g.,
for the moduli space of semistable rank 2 degree zero
holomorphic vector bundles with zero determinant 
on a Riemann surface of genus 2:
This moduli space is a 3-dimensional complex projective space,
and the non-stable semistable points constitute a Kummer surface
\cite{naramntw};
the stratified symplectic Poisson structure
is defined everywhere but the symplectic structures live
only on the strata \cite{MR1938554}.
It is very likely that similar phenomena occur
for the 
self duality moduli spaces as
stratified holomorphic symplectic spaces.
This is presumably in particular true for example
for the self duality moduli space
that corresponds to the moduli space of semistable rank 2 degree zero
holomorphic vector bundles with zero determinant 
on a Riemann surface of genus 2.
We expect this moduli space to be
a complex manifold but to have more than one stratum
as a stratified holomorphic symplectic space.

At the risk of being repetitive we note that, in what follows,
the {\em singularities of a stratified complex analytic space\/}
are the points in the complement of the top stratum;
these are not necessarily the singularities of
that complex analytic space \cite{MR788175}, that is,
a point in the complement of the top stratum
is not necessarily a singularity
relative to the complex analytic structure.

\section{Group cohomology construction of a hamiltonian holomorphic
symplectic K\"ahler structure}
\label{grcoho}

To take care over the terminology:
A complex {\em reductive\/} Lie group
is an affine complex algebraic group 
that is reductive
in the sense that every rational representation is completely
reducible; also the terminology {\em linearly reductive\/}
is in the literature. Equivalently,
a complex reductive Lie group arises as the complexification
of a compact Lie group,
and an affine complex
complex algebraic group is reductive if and only if the
unipotent radical of its connected component of the identity 
(in the classical topology) is trivial.
Thus a complex reductive Lie group is not necessarily connected.

Let $\cgroup$ be a complex reductive Lie group and $\cdot$
 a non-degenerate $\CC$-valued invariant 
symmetric bilinear
form on its Lie algebra $\clieal$.
The Maurer-Cartan calculus in
\cite{MR1460627}, \cite[Section 1]{MR1370113}, \cite {MR1277051}, 
\cite{MR1470732}, \cite{MR1362845}
is then available  over $\CC$ for the group $\cgroup$.
To recall its ingredients, let $\AA$ denote the de Rham forms
and, for a differential form $\alpha$ on $\cgroup$,
let $\alpha_j$ ($j=1,2$) denote the pullback of $\alpha$
by the projection $p_j$ to the $j$'th component:
\begin{enumerate}
\item the left invariant Maurer-Cartan form
$\omega \in \AA(\cgroup,\clieal)$
and the right invariant Maurer-Cartan form
$\overline \omega \in \AA(\cgroup,\clieal)$; 
\item 
the triple product
$\tau(x,y, z) = \tfrac 12 [x,y]\cdot z$,
$x,y,z \in \clieal$;
\item 
the Cartan 3-form
$\lambda = \tfrac 1 {12} [\omega,\omega]\cdot \omega$;
\item the 2-form $\Omega= \tfrac 12 \omega_1 \cdot \overline \omega_2 \in \AA^2(\cgroup \times \cgroup)$;
\item the equivariant 1-form $\vartheta \colon \clieal \to \AA^1(\cgroup)$ given by
\begin{equation}
\vartheta(X) = \tfrac 12 X \cdot (\omega + \overline \omega),\ X \in \clieal.
\end{equation}

\end{enumerate}

Let
\begin{equation}
\mathcal P = \langle x_1,y_1,\dots,x_\ell,y_\ell; r\rangle,\quad
r = \Pi [x_j,y_j],
\end{equation}
be the
standard presentation of the
fundamental group $\pi$ of a closed (real) surface $\Sigma$
of genus $\ell$.
The relator $r$ induces a complex algebraic  map
\begin{equation}
r \colon \cgroup^{2 \ell} \longrightarrow \cgroup.
\end{equation}
Let $O\subseteq \clieal$ be the open $\cgroup$-invariant subset of $\clieal$
where the exponential mapping
from $\clieal$ to $\cgroup$
is regular;
the reader will notice that $O$ contains the center of $\clieal$.
Define the space
$\mathcal H(\mathcal P,\cgroup)$
by requiring that
\begin{equation}
\begin{CD}
\mathcal H(\mathcal P,\cgroup)
@>r_O>> O
\\
@V{\eta}VV
@VV{\mathrm{exp}}V
\\
\cgroup^{2\ell}
@>>r> \cgroup
\end{CD}
\label{PB}
\end{equation}
be a pullback diagram; here we denote by $\eta$ and $r_O$
the induced maps.
The space
$\mathcal H(\mathcal P,\cgroup)$
is a complex manifold
and the induced map
$\eta$ from
$\mathcal H(\mathcal P,\cgroup)$
to
$\cgroup^{2\ell}$
is a holomorphic codimension zero immersion whence
$\mathcal H(\mathcal P,\cgroup)$ has the same dimension as $\cgroup^{2\ell}$.

Let $F$ be the free group on  $x_1,y_1,\dots,x_\ell,y_\ell$.
Evaluation yields a bijection $\Hom(F,\cgroup) \to \cgroup^{2\ell}$.
This induces an injection
of $\mathrm{Hom}(\pi,\cgroup)$
into
$\mathcal H(\mathcal P,\cgroup)$
and, in this way, we view
$\mathrm{Hom}(\pi,\cgroup)$
as a subspace of
$\mathcal H(\mathcal P,\cgroup)$.

Let $c \in C_2(F)$ 
be a 2-chain
whose image in  $C_2(\pi)$ 
is closed and
represents  a generator of
$\mathrm H_2(\pi) \cong \mathbb Z$.
Our approach is independent of a choice of complex structure
on $\Sigma$ and hence we need not worry about the choice of
an orientation.
Let 
\begin{equation}
\omega_{c,\mathcal P} = \eta^*(\omega_c)-r^*B
\label{closed}
\end{equation}
 be the closed
$\cgroup$-invariant $2$-form
on $\mathcal H(\mathcal P,\cgroup)$ 
in  \cite[Theorem 1]{MR1370113},
let $\psi \colon \clieal \to   \clieal^*$
denote the adjoint of the $2$-form $\,\cdot\,$
on $\clieal$,
and recall that the composite
\begin{equation}
\mu_{c,\mathcal P} \colon
\mathcal H(\mathcal P,\cgroup)
\stackrel{r_O} \longrightarrow O \subseteq \clieal
\stackrel{\psi}\longrightarrow  \clieal^* 
\label{eq}
\end{equation}
is an  equivariantly closed extension 
of  $\omega_{c,\mathcal P}$
\cite[Theorem 2]{MR1370113}
(written there as $\mu$).
As for how $\psi$ arises in this context, see also the remark at the end of
Section 1 of \cite{MR1370113}.
By construction, under the present circumstances,
$\omega_{c,\mathcal P}$
and $\mu_{c,\mathcal P}$ are holomorphic.

Let
$\mathcal M(\mathcal P,\cgroup)$
be the subspace of
$\mathcal H(\mathcal P,\cgroup)$
where
the 2-form $\omega_{c,\mathcal P}$
is non-degenerate;
this is an open $\cgroup$-invariant
subset containing
the pre-image $r^{-1}(\zz)$.
Abusing the notation slighly,
denote the restriction of $\mu_{(c,\mathcal P)}$
to 
$\mathcal M(\mathcal P,\cgroup)$
as well by $\mu_{(c,\mathcal P)} \colon \mathcal M(\mathcal P,\cgroup) \to \clieal^*$.
Then
\begin{equation}
(\mathcal M(\mathcal P,\cgroup),\omega_{c,\mathcal P},\mu_{(c,\mathcal P)})
\end{equation}
is a $\cgroup$-hamiltonian complex manifold.

Applying the procedure of symplectic reduction naively
to $(\mathcal M(\mathcal P,\cgroup),\omega_{c,\mathcal P},\mu_{(\mathcal P,\cgroup)})$
poses problems since we need a \lq\lq good\rq\rq\ 
analytic (or Hilbert) $\cgroup$-quotient 
\cite{MR1748608}
of 
analytic sets of the kind
$\mu_{(\mathcal P,\cgroup)}^{-1}(q)$ for points $q$ in the dual
$\zz^*$ of the center $\zz$ of $\clieal$,
this dual $\zz^*$ being well-defined
since $\zz$ is a direct summand of $\clieal$.
In the next section we show how results in 
\cite{MR1274117} and
\cite{mayrand}
enable us to overcome these difficulties.

\begin{rema}
{\rm
An extended moduli space 
arises
as a special case of a general construction
which renders lattice gauge theory rigorous
\cite{MR1670408}.
}
\end{rema}

\section{Reduction of hamiltonian holomorphic symplectic K\"ahler manifolds}
\label{reduc}

For a smooth symplectic manifold with a hamiltonian action
of a compact Lie group, 
 Sjamaar-Lerman proved that
the reduced space acquires a stratified symplectic structure
 \cite{MR1127479}.
Their arguments 
rely on the {\em Guillemin-Sternberg-Marle\/} 
local normal form of the momentum mapping
 \cite{MR767835, MR753857, MR859857}.
Sjamaar-Lerman  \cite{MR1127479} 
noted that
this normal form implies that, locally,
such a reduced space
is isomorphic to one arising from linear symplectic reduction
and thereby extended
the {\em Darboux\/} theorem  to such reduced spaces.
Also, from the local model, they deduced that
the orbit type decomposition
is a {\em Whitney stratification\/}.

In \cite{mayrand}, Mayrand
addresses these issues in the holomorphic setting.
He settles them merely for hamiltonian 
hyperk\"ahler manifolds
but his arguments, suitably extended, work 
for hamiltonian
 holomorphic symplectic K\"ahler manifolds, and this extension clarifies the 
nature of the arguments, simplifies the exposition
and, as we show in this paper, opens a wealth of 
attractive
examples.
Here we extend this approach to hamiltonian
 holomorphic symplectic K\"ahler manifolds, taylored to our purposes.

\subsection{Decomposed and stratified spaces}

A {\em decomposed\/} space is a space $X$
together with a family of pairwise disjoint
subspaces that are smooth manifolds, the {\em pieces\/}
of the decomposition, such that $X$ is the union of the pieces.
For a  decomposed space $X$,
we use the notation $C^\infty(X)$ 
for an algebra of real-valued continuous functions on $X$,
a {\em smooth structure\/} 
\cite {MR0467544},
which, on each
piece of the decomposition, are ordinary smooth functions;
we then denote by $C^\infty(X,\CC)$
the obvious extension of
$C^\infty(X)$ to an algebra of
complex-valued continuous functions on $X$.
There is no claim to the effect
that the restriction $C^\infty(X) \to C^\infty(S)$
to a stratum $S$ be onto; in the situations under discussion below,
the image of the restriction will contain the compactly supported functions
on that stratum, and this suffices
for characterizing the various geometric structures under discussion;
thus, there is no need to \lq\lq sheafify\rq\rq\ the 
emooth structures.
Below we use the term \lq stratification\rq\ 
and \lq stratified\rq\ space
but, deliberately, 
we do not make this precise.
In particular,
 \lq stratified\rq\ space could simply mean
 \lq decomposed\rq\ space,
and the definitions still make sense.
Mather's definition \cite{MR0368064}
provides a good understanding of the idea of a statification; see also
\cite{MR1869601} and the literature there.
For intelligibility we recall that
a stratification (in the sense of Mather)
of a space $X$ is a map $\mathcal S$ which assigns to each point $x$ of $X$
the set germ $\mathcal S_x$ of a locally closed subset of $X$ such 
that the following holds:
{\em For each $x \in X$ there is an open 
neighborhood\/} $U$ {\em of\/} $x$ {\em and a decomposition $\mathcal Z_U$ of $U$ 
such that, for $y \in U$, the set germ $\mathcal S_y$ coincides 
with the set germ of the unique piece $R_y \in \mathcal Z_U$ 
which contains $y$ as an element.}

Recall that
a {\em stratified K\"ahler
space\/} \cite{MR2096203, MR2212880, MR2883413}
consists of a complex analytic space $X$, together with
\begin{enumerate}
\item[{\rm (i)}] a complex analytic stratification (a not necessarily proper refinement
of the standard
complex analytic stratification,
cf.
{\rm \cite{MR788175}}), and with
\item[{\rm (ii)}] a real stratified symplectic structure
$(C^{\infty}X,\{\,\cdot\,,\,\cdot\,\})$ \cite{MR1127479} which is
 compatible with the complex
analytic structure.
\end{enumerate}
The two structures being {\empha compatible\/}
means the following:
\\
(i) For each point $q$ of $X$ and each holomorphic function
$f$ defined on an open neighborhood $U$ of $q$,
there is an open neighborhood $V$ of $q$ with $V \subset U$
such that, on $V$,
$f$ is the restriction of a function in $C^{\infty}(X,\CC)$;
\\
(ii) on each stratum,
the symplectic structure
determined by the symplectic Poisson structure
(on that stratum) combines
with the complex analytic structure to a K\"ahler
structure.

We extend this terminology to the hyperk\"ahler setting as follows;
to this end we recall that the three K\"ahler forms of a hyperk\"ahler 
structure
encapsulate the entire hyperk\"ahler structure, 
cf. \cite [Lemma 6.8]{MR887284},
\cite {MR935967}.

\begin{defi}
\begin{enumerate}
\item
A {\em stratified Poisson hyperk\"ahler space\/}
consists of a stratified space $X$, a smooth structure $C^\infty (X)$ on $X$,
and three Poisson structures $\pbra_1$, 
$\pbra_2$,  $\pbra_3$
on  $C^\infty (X)$
so that, on each stratum, 
for $j =1,2,3$, the bracket $\pbra_j$ 
is the Poisson structure associated with a
symplectic structure $\omega_j$ and that
$\omega_1$, $\omega_2$, $\omega_3$
constitute a hyperk\"ahler structure on that stratum.

\item
A {\em stratified holomorphic symplectic space\/}
consists of a complex analytic space $(X,\OO_X)$ 
together with
a complex analytic stratification and
a holomorphic Poisson structure $\pbra_X$
on the sheaf $\OO_X$ of germs of holomorphic functions
on $X$ which, on each stratum,
restricts to the holomorphic Poisson structure
associated with a holomorphic symplectic structure
on that stratum.
\item
 A {\em stratified holomorphic symplectic K\"ahler
space\/}
consists of a stratified K\"ahler space $(X,C^\infty(X),\OO_X,\pbra_\RR)$, 
together with
a holomorphic Poisson structure $\pbra_\CC$
on the sheaf $\OO_X$ of germs of holomorphic functions
on $X$ which, on each stratum,
restricts to the holomorphic Poisson structure
associated with a holomorphic symplectic structure
on that stratum.
\item
 A {\em weak stratified hyperk\"ahler
space\/} is a
stratified holomorphic symplectic K\"ahler
space 
$(X,C^\infty(X),\OO_X,\pbra_\RR,\pbra_\CC )$ such that,
on each stratum, the pieces of structure combine to an
ordinary 
hyperk\"ahler structure.
\item
A  {\em stratified 
hyperk\"ahler space} 
is a stratified space 
together with {\rm (i)}
three complex analytic structures $\OO_{\mathsf I}$,
$\OO_{\mathsf J}$, $\OO_{\mathsf K}$
which are compatible with the stratification
and {\rm (ii)}  three pairwise compatible real Poisson structures
$\pbra_{\mathsf I}$,
$\pbra_{\mathsf J}$, $\pbra_{\mathsf K}$
such that
\begin{equation}
\left(\OO_{\mathsf I},\pbra_{\mathsf J}+i\pbra_{\mathsf K}\right), \quad
\left(\OO_{\mathsf J},\pbra_{\mathsf K}+i\pbra_{\mathsf I}\right), \quad
\left(\OO_{\mathsf K},\pbra_{\mathsf I}+i\pbra_{\mathsf J}\right)
\label{holopoisdef}
\end{equation}
are holomorphic Poisson structures
which are compatible with the stratification and, on each stratum, restrict
to an ordinary  hyperk\"ahler structure.
\end{enumerate}
\end{defi}
A stratified hyperk\"ahler structure generates
a sphere of complex analytic and compatible
real Poisson structures.

\begin{rema}
{\rm
Mayrand gives the definition of a stratified 
hyperk\"ahler space as \cite[Definition 3.1.9]{maxence2019a}.
}
\end{rema}

\subsection{Quotients}
\label{quotients}

Let $\GG$ be a 
topological group.
For a $\GG$-space $X$,
a $\GG$-{\em subset\/} is a subset of $X$ 
that  is closed under 
the $\GG$-action.
For  $\GG$-space $Y$,
we say a $\GG$-invariant map $\pi \colon Y \to Y_0$
to a space $Y_0$ (with trivial $\GG$-action) 
is a $\GG$-{\em reduction\/} if
\begin{enumerate}
\item[{(\rm $G$-red 1)}]
every fiber $\pi^{-1}(y_0) \subseteq Y$, as $y_0$ ranges over $Y_0$, contains
exactly one closed $\GG$-orbit.
\end{enumerate}

Let $Y$ be a $\GG$-space.
As in \cite[\S 1.1 p.~173]{MR364272},
consider the following property,
see also $(*)$ \cite[\S 7.2 p.~420]{MR1087217}:
\begin{enumerate}
\item[{\rm (A)}]
Each $\GG$-orbit in $Y$ contains in its closure
a unique closed $\GG$-orbit.
\end{enumerate}

Let $Y$ be a $\GG$-space enjoying property (A).  Extending a construction in
\cite[\S 1.1 p.~173]{MR364272},
see also  \cite[\S 7.2 p.~421]{MR1087217},
define the 
{\em quotient $Y // \GG$ of
$Y$ by\/} $\GG$ to be the space whose points are the
closed $\GG$-orbits in $Y$, the $\GG$-{\em quotient map 
$\pi_Y\colon Y \to Y // \GG$\/}
to be the map which assigns to a point of $Y$ 
the unique closed orbit in the closure of its $\GG$-orbit,
and endow $ Y // \GG$ with the quotient topology.
Then $\pi_Y\colon Y \to Y // \GG$
is a $\GG$-reduction.

\subsection{Holomorphic symplectic reduction}
\label{hsr}

Let $(M,\omega_\CC)$ be a holomorphic symplectic manifold,
let $G$ be a complex reductive Lie group, write its Lie algebra as
$\gg$, suppose $G$ acts holomorphically on $M$ in a Hamiltonian fashion,
and let $\mu_\CC \colon M \to \gg^*$ denote
the holomorphic momentum mapping.
We refer to $(M,\omega_\CC,\mu_\CC)$
as a $G$-{\em hamiltonian holomorphic symplectic manifold\/}.

Our goal is to build the analogue  of the 
stratified symplectic
structure
on the reduced space for the real case
recalled at the beginning of this section.
The present aim is to show that  the analytic variant of
Kempf-Ness theory in \cite{MR1408556} 
yields the 
requisite complex
analytic quotient of the 
zero locus $\mu_\CC^{-1}(0)$
as a complex analytic space.
To this end
suppose that $M$ possesses,
independently of  $\omega_\CC$,
 an ordinary real K\"ahler form $\omega_\RR$
invariant under a maximal compact subgroup $K$ of $G$, and 
suppose the $K$-action on $M$ is hamiltonian
with momentum mapping $\mu_\RR\colon M \to \kk^*$.
Consider the
subspace
\begin{align}
M^{\Rr}&=\{q \in M; \overline {Gq}\cap \mu_\RR^{-1}(0) \ne 
\emptyset\}
\label{semistable}
\end{align}
 of {\em momentum semistable points of
$M$\/} with respect to $\mu_\RR$, cf.
\cite[Section 0]{MR1408556} for the terminology;
these are the {\em analytically semistable points\/}
in the sense of \cite[Definition 2.2 p.~109]{MR1314032}.
The following summarizes various results in the literature.

\begin{prop} Suppose the subspace $M^{\Rr}$
of momentum semistable points in $M$ is non-empty.
\label{openness}
\begin{enumerate}
\item 
The subspace $M^{\Rr}$ is $\GG$-invariant 
and open in $M$,
indeed, the smallest 
 $\GG$-invariant 
open subspace of $M$ containing $\mu_\RR^{-1}(0)$.

\item
The zero locus
$\mu_\RR^{-1}(0)$
is
a {\em Kempf-Ness set (fiber critical set)\/}, 
that is,
\begin{enumerate}
\item[{(\rm KN 1)}]
for $x \in  M^{\Rr}$,
the orbit $\GG x$ is closed in  $M^{\Rr}$ if and only if
$\GG x \cap \mu_\RR^{-1}(0) \ne \emptyset$;
\item[{(\rm KN 2)}]
for $x \in \mu_\RR^{-1}(0)$, the $\KKK$-orbit $\KKK x$ coincides with
$\GG x \cap \mu_\RR^{-1}(0)$.
\end{enumerate}

\item The $\GG$-manifold
$M^{\Rr}$
 admits a $\GG$-reduction
$\pi\colon M^{\Rr} \to M^{\Rr}// \GG$
in such a way that the inclusion 
$\mu_\RR^{-1}(0) \subseteq M^{\Rr}$
induces a homeomorphism
\begin{equation}
M//_{\mu_\RR} K = \mu_\RR^{-1}(0)/K \to M^{\Rr}//G.
\end{equation}
In particular, the quotient space
$\mu_\RR^{-1}(0)/K \cong M^{\Rr}//G$ is a Hausdorff space.
\item
The subspace $M^{\Rr}$
is dense in $M$.

\end{enumerate}
\end{prop}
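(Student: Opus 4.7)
The plan is to assemble the four parts from the analytic Kempf-Ness theory of Heinzner-Loose \cite{MR1274117} together with \cite{MR1408556}; the hard analysis is already in those references, so the remaining task is to check that the definition \eqref{semistable} of $M^{\Rr}$ dovetails with that framework and with the formal setup of Subsection \ref{quotients}. The recurring technical input is the Cartan decomposition $\GG = \KKK\exp(i\kk)$ together with the convexity, along each one-parameter subgroup $t \mapsto \exp(it\xi)$ with $\xi \in \kk$, of the Kempf-Ness function $t \mapsto \langle \mu_\RR(\exp(it\xi)x), \xi\rangle$.

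For part (1), $\GG$-invariance is immediate from $\overline{\GG(gx)} = \overline{\GG x}$. Openness of $M^{\Rr}$ is the content of analytic Kempf-Ness, which identifies $M^{\Rr}$ with the open basin of attraction of $\mu_\RR^{-1}(0)$ under the negative gradient flow of $\|\mu_\RR\|^2$. For minimality, let $U \subseteq M$ be $\GG$-invariant, open, and contain $\mu_\RR^{-1}(0)$, and let $x \in M^{\Rr}$: then $\overline{\GG x}$ meets $\mu_\RR^{-1}(0) \subseteq U$, openness of $U$ forces $\GG x \cap U \neq \emptyset$, and $\GG$-invariance of $U$ then forces $x \in U$. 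For part (2), the convexity argument gives (KN 1) in both directions: the infimum of $\|\mu_\RR\|^2$ on a closed $\GG$-orbit inside $M^{\Rr}$ is attained at a critical point which convexity places in $\mu_\RR^{-1}(0)$, and conversely any $y \in \GG x \cap \mu_\RR^{-1}(0)$ globally minimizes $\|\mu_\RR\|^2$ on $\GG x$, so $\GG x$ is closed in $M^{\Rr}$. For (KN 2), writing $g = k\exp(i\xi)$ with $x, gx \in \mu_\RR^{-1}(0)$, the Kempf-Ness function vanishes at both endpoints and hence identically on $[0,1]$ by convexity, so $\exp(i\xi)$ stabilizes $x$ and $gx = kx \in \KKK x$.

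Part (3) is then formal. Property (A) of Subsection \ref{quotients} holds on $M^{\Rr}$ by (KN 1) together with the uniqueness clause of Kempf-Ness, yielding the $\GG$-reduction $\pi\colon M^{\Rr} \to M^{\Rr}//\GG$. The composite $\mu_\RR^{-1}(0) \hookrightarrow M^{\Rr} \to M^{\Rr}//\GG$ is surjective by (KN 1), factors through $\mu_\RR^{-1}(0)/\KKK$ by (KN 2), and descends to a continuous bijection whose inverse is continuous via the Kempf-Ness retraction of $M^{\Rr}$ onto $\mu_\RR^{-1}(0)$ along the gradient flow. Hausdorffness of $\mu_\RR^{-1}(0)/\KKK$ is immediate from compactness of $\KKK$ and transfers to $M^{\Rr}//\GG$ through the homeomorphism.

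Part (4) I expect to be the main obstacle. The plan is to use the holomorphic slice theorem \cite[\S 2.7]{MR1274117}, \cite[Theorem 1.12]{MR1314032} to linearize the $\GG$-action in a local analytic model about each orbit in $M$, and to argue that in such a model the unstable locus is a proper complex analytic subset of the slice, hence nowhere dense; density of $M^{\Rr}$ in $M$ will then follow by a covering argument, taking care to handle connected components of $M$ and to exploit the standing hypothesis that $M^{\Rr}$ is non-empty to rule out pathological components on which the basin of attraction would be empty.
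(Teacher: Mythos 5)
Your overall strategy---delegate the hard analysis to the analytic Kempf--Ness theory of \cite{MR1274117} and \cite{MR1408556} and check that the definition \eqref{semistable} fits that framework---is the paper's own: it attributes claims (1)--(3) to \cite{MR1274117} and claim (4) to \cite{MR1408556}, and your formal pieces (minimality in (1), (KN\,2) via convexity along $t\mapsto\exp(it\xi)x$, the derivation of (3) from (2) and property (A)) are fine. The genuine gap sits exactly at the point the paper flags as delicate. You justify openness of $M^{\Rr}$ by identifying it with the basin of attraction of $\mu_\RR^{-1}(0)$ under the negative gradient flow of $\|\mu_\RR\|^2$, and you invoke the same flow (``the Kempf--Ness retraction'') for continuity of the inverse in (3). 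That picture is available only when the gradient flow is globally defined---e.g.\ when $\mu_\RR$ is proper---which is the setting of \cite[Proposition 2.4, Theorem 2.5]{MR1314032} and is \emph{not} assumed in the proposition. In the general case the paper replaces the flow by a local argument: the holomorphic slice theorem of \cite{MR1274117} produces, around each point of $\mu_\RR^{-1}(0)$, a Stein slice neighborhood carrying a strictly plurisubharmonic $K$-invariant \emph{relative exhaustion} potential inducing the restrictions of $\omega_\RR$ and $\mu_\RR$; Proposition~\ref{relexhaust} then shows every point of such a neighborhood is semistable (the potential restricted to each fiber of the Stein quotient is a bounded-below exhaustion, hence attains its minimum exactly on the zero level), and saturating gives openness. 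The same exhaustion property is what your (KN\,1) argument silently uses: on a noncompact closed orbit the infimum of $\|\mu_\RR\|^2$ need not be attained, and the converse step ``$y$ minimizes, hence $Gx$ is closed in $M^{\Rr}$'' is itself a nontrivial assertion requiring the slice/exhaustion input. (The forward direction of (KN\,1) is actually immediate from \eqref{semistable} once one notes $\mu_\RR^{-1}(0)\subseteq M^{\Rr}$, with no minimization needed.)

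Part (4) as written is a plan rather than a proof. The assertion that the unstable locus is a proper complex analytic subset of a slice is not substantiated, and the slice theorem you propose to use is stated at points of the zero level (equivalently, at points with closed orbit meeting it), not at the arbitrary, possibly unstable, points of $M$ that are the relevant ones for density. The paper simply quotes \cite[Lemma in Section 9 p.~83]{MR1408556} here; unless you reprove that lemma, this part should be a citation, not a sketch. For an alternative to the paper's openness argument you could also consult \cite{heinznerstoetzel}, which the paper mentions as a substantially different proof.
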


Claims (1) -- (3) are due to \cite{MR1274117}
(\S 1.3 Theorem p.~289, \S 3.3 Theorem p.~295).
Under the additional assumption that the gradient flow
of the negative of the norm square of $\mu_\RR$ be globally defined
they are in \cite[Proposition 2.4 p.~110, Theorem 2.5 p.~112]{MR1314032};
this assumption holds, e.g., when $\mu_\RR$ is proper.
Under even more restrictive hypotheses
these observations are due to \cite{MR766741}.
Claim (4) is \cite[Lemma in Section 9 p.~83]{MR1408556}.

The reasoning in  \cite{MR1274117} for 
the openness of $M^{\Rr}$ is somewhat cryptic.
This openness is certainly well understood among the experts.
However, the non-expert will have difficulties
extracting a proof from the literature.
We therefore take the liberty of sketching a proof,
concocted with the help of P. Heinzer.
A proof  substantially different from that we are about to reproduce 
is in \cite{heinznerstoetzel}.

Consider a Kaehler manifold $(X,\omega)$
with a holomorphic $G$-action
whose restriction to a maximal compact subgroup $K$
preserves $\omega$. Recall
a $K$-invariant function
$\varphi\colon X \to \RR$
is a {\em (Kaehler) potential\/}
when
\begin{align}
\omega&=- \tfrac 12 d d^c \phi = i \partial \overline \partial \phi,
\ 
d^c = i (\partial- \overline \partial);
\label{pot1}
\end{align}
then $\varphi$ is necessarily strictly plurisubharmonic and,
with the notation $\xi_X$ for the vector field 
on $X$ which a member $\xi$ of the Lie algebra $\kk$ of $K$ induces,
the identity
\begin{align}
\xi \circ \mu&= \tfrac 12  (d^c \phi)(\xi_X)= \tfrac 12  (d\phi)(J\xi_X), \ \xi \in \kk,
\label{pot2}
\end{align}
characterizes a $K$-momentum mapping $\mu \colon X \to \kk^*$
which renders the $K$-action on $X$ hamiltonian with respect to $\omega$.
For a Hausdorff $G$-quotient $\pi \colon X \to Q$ 
(provided it exists)
a {\em relative exhaustion\/}
 \cite[\S 3.1 p.~330, \S 3.3 p.~336]{MR1748608}
is a smooth $K$-invariant function
$\psi\colon X \to \RR$
that
is bounded from below and has the property that
\begin{equation}
\psi \times \pi \colon X \to \RR \times  Q
\end{equation}
is proper.
When $X$ is Stein, a Hausdorff quotient exists
as a Stein space
 \cite{MR656653}, 
\cite[Proposition 3.1.2 p.~328]{MR1748608}.
Let $N$ be a Stein manifold with a holomorphic
$G$-action and a strictly plurisubharmonic relative exhaustion function
$\varphi\colon N \to \RR$ 
invariant under a maximal compact subgroup $K$ of $G$.
We then say $(N,G,K,\varphi)$
is a {\em relative exhaustion
Stein $G$-manifold\/}.
Recall a real function $f$ 
on a (reasonable topological) space  $D$ 
is an {\em exhaustion function\/}
if $\{z; f(z) < r\} \subseteq D$
is relatively compact in $D$ for any real $r$.
Here is \cite[Lemma 1 p.~131]{MR1293876}
in another guise:

\begin{prop}
\label{relexhaust}
For a  relative exhaustion
Stein $G$-manifold
$(N,G,K,\psi)$,
the momentum semistable subspace
relative to the  momentum mapping
$\mu \colon N \to \kk^*$ 
which $\psi$ induces via 
{\rm \eqref{pot2}}
coincides with $N$.
\end{prop}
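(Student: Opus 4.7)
The plan is to show that every point $x \in N$ satisfies $\overline{Gx} \cap \mu^{-1}(0) \neq \emptyset$ by producing a zero of $\mu$ as a minimum of the potential $\psi$ along the orbit closure. Fix $x \in N$ and write $q = \pi(x) \in Q$ for its image in the Stein Hausdorff $G$-quotient $\pi \colon N \to Q$; continuity of $\pi$ and $\pi(Gx) = \{q\}$ force $\overline{Gx} \subseteq \pi^{-1}(q)$. The relative exhaustion hypothesis asserts that $\psi \times \pi \colon N \to \RR \times Q$ is proper; applied to the compact sets $[c, r] \times \{q\}$, where $c$ is a lower bound for $\psi$, this shows that every sublevel set $\{z \in \pi^{-1}(q) : \psi(z) \leq r\}$ is compact in $N$. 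Intersecting with the closed subset $\overline{Gx}$ yields a continuous function $\psi|_{\overline{Gx}}$ with compact sublevel sets, bounded below, which therefore attains its infimum at some point $y \in \overline{Gx}$.

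The second step is to identify $y$ as a zero of $\mu$. Since $\overline{Gx}$ is $G$-invariant, the orbit $Gy$ sits inside $\overline{Gx}$, so $y$ is a fortiori a minimum of $\psi|_{Gy}$. Hence $(d\psi)_y$ vanishes on the tangent space $T_y(Gy)$, which is spanned by the fundamental vector fields $\zeta_N(y)$ for $\zeta \in \gg$. Decomposing $\gg = \kk \oplus i\kk$, the condition $(d\psi)_y(\xi_N(y)) = 0$ for $\xi \in \kk$ is automatic from the $K$-invariance of $\psi$, while holomorphicity of the $G$-action gives $(i\xi)_N(y) = J\xi_N(y)$; the remaining condition $(d\psi)_y(J\xi_N(y)) = 0$ for all $\xi \in \kk$ translates, via the potential identity \eqref{pot2}, precisely into $\langle \mu(y), \xi \rangle = 0$ for all $\xi \in \kk$, that is, $\mu(y) = 0$. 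Thus $y \in \overline{Gx} \cap \mu^{-1}(0)$ and $x$ belongs to the momentum semistable subspace; as $x \in N$ was arbitrary, this subspace coincides with $N$.

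The principal obstacle is the existence of the minimum in the first step: without the relative exhaustion hypothesis, the infimum of $\psi$ along $\overline{Gx}$ could fail to be attained because of noncompact escape directions inside the fiber $\pi^{-1}(q)$, and the joint properness of $\psi \times \pi$ is exactly what precludes this. Once $y$ has been produced, its identification as a zero of $\mu$ is a routine Kempf--Ness style computation relying only on the potential formula \eqref{pot2} and the compatibility $(i\xi)_N = J\xi_N$ for the holomorphic action. Strict plurisubharmonicity of $\psi$ is not needed for existence, though it ensures uniqueness of the minimum modulo $K$ and hence that $y$ lies on the unique closed $G$-orbit in $\pi^{-1}(q)$.
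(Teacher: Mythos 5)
Your argument is correct and follows essentially the same route as the paper's: both produce a zero of $\mu$ in each orbit closure by minimizing the potential $\psi$ --- which the joint properness of $\psi\times\pi$ together with the lower bound makes an exhaustion of each fiber of $\pi$ --- and then identify the minimizer as a zero of $\mu$ via the critical-point computation from \eqref{pot2}. The only cosmetic difference is that you minimize $\psi$ directly over $\overline{Gx}$, whereas the paper minimizes over a closed orbit in the fiber $\pi^{-1}(\pi(x))$ and records along the way the stronger conclusion that $\mu^{-1}(0)/K\to Q$ is a homeomorphism.
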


\begin{proof}
Since the restriction of $\psi$ to a fiber is proper and bounded,
it is an exhaustion on that fiber and, in view of \eqref{pot2},
the restriction of $\psi$
to a closed orbit  has a critical point, necessarily an 
absolute minimum.
Hence,
with respect to the Stein quotient map $\pi \colon N \to Q$,
\begin{equation}
\mu^{-1}(0) =\{ p;\psi|_{\pi^{-1}\pi(p)}\ 
\text{attains\  its\  minimum\ at\ } p \} .
\end{equation}
This observation implies that 
the restriction
$\pi|_{\mu^{-1}(0)}$ of $\pi$
to $\mu^{-1}(0)$
is surjective
and induces
a continuous bijective map $\mu^{-1}(0)/K \to Q$.
See, e.g.,
the proof of \cite[Section 3 Proposition 3.1.5 p.~329]{MR1748608}.
Since $\psi$ is bounded from below
and $\psi \times \pi$ proper,
the  map $\mu^{-1}(0)/K \to Q$
is a homeomorphism
\cite[Lemma 1 p.~131]{MR1293876},
\cite[Section 3 Proposition 3.1.7 p.~331]{MR1748608}.
\end{proof}

\begin{proof}[Proof of openness of $M^{\Rr}$ in $M$]
Let $q$ be a point of the zero locus $\mu_\RR^{-1}(0)$.
The proof of the slice theorem
\cite[\S 2.7 Theorem p.~292]{MR1274117}
yields an open slice neighborhood $N$ of $q$ in $M$ 
that underlies a
relative exhaustion
Stein $G$-manifold
$(N,G,K,\psi)$
in such a way
that $\psi$ determines the restrictions
to $N$ of
$\omega_\RR$ and $\mu_\RR$.
The construction of $\psi$ builds on
a similar construction
in \cite{MR1293876}
and in particular relies on
 \cite[Lemma~2]{MR1293876}.
\end{proof}

With these preparations out of the way, let
\begin{align*}
\mu^{-1}_\CC(0)^{\Rr}&=\mu^{-1}_\CC(0)\cap M^{\Rr},
\\
\mu &=(\mu_\RR,\mu_\CC) \colon M \longrightarrow \kk^* \times \gg^*.
\end{align*}
Then
\begin{equation}
M_0:=\mu^{-1}_\CC(0)^{\Rr}  //G
\label{anaquot}
\end{equation}
is the analytic quotient of $\mu_\CC^{-1}(0)$ we are looking for,
and
the inclusion 
$\mu_\RR^{-1}(0) \subseteq M^{\Rr}$
induces a homeomorphism
\begin{equation}
\mu^{-1}(0)/K
\longrightarrow
M_0=\mu^{-1}_\CC(0)^{\Rr}  //G.
\end{equation}
The orbit space
$ \mu^{-1}(0)/K$ is a topological model for the analytic quotient 
$M_0$
of 
$\mu_\CC^{-1}(0)$.

\subsection{Holomorphic local model}

\subsubsection{  $\TT^*\cgroup$}
Endow $\TT^*\cgroup$ with the algebraic cotangent bundle symplectic structure
and identify $\TT^*\cgroup$ biholomorphically with 
$\cgroup\times \gg^*$ via left translation.
Accordingly the action of $\cgroup\times \cgroup$ 
on $\TT^*\cgroup$ which left and right translation on $\cgroup$ induces
takes the form
\begin{equation}
\cgroup \times \cgroup \times \cgroup \times \gg^*
\longrightarrow
 \cgroup \times \gg^*,
\ 
(x,y,u, \xi) \longmapsto (xuy^{-1}, \Ad^*_y\xi),
\end{equation}
and the association 
\begin{equation}
\cgroup \times \gg^* \longrightarrow \gg^* \times \gg^*,\ 
(x, \xi) \longmapsto (\Ad^*_x\xi, -\xi).
\label{moma1}
\end{equation}
characterizes the algebraic $\cgroup \times \cgroup$-momentum mapping
that turns 
$\TT^*\cgroup \cong \cgroup\times \gg^*$
into a 
 $\cgroup \times \cgroup$-hamiltonian complex algebraic manifold
relative to the algebraic  cotangent bundle symplectic structure.

\subsubsection{Geometry of the local model}
\label{geometry}
Let $H$ be a reductive subgroup 
of $\cgroup$ and $V$ a complex symplectic  representation of $H$.
Write the complex symplectic form on $V$ as $\omega_V$.
The familiar algebraic momentum mapping 
\begin{equation}
\Phi_V\colon V \longrightarrow \hh^*,\ 
\Phi_V(v) (x) = \tfrac 12 \omega_V(xv,v),
\label{moma22}
\end{equation}
turns $V$ into a complex algebraic hamiltonian $H$-space.
Relative to the embedding of $H$ into $G \times G$
via the second copy of $G$,
take the product momentum mapping
\begin{equation}
\lambda\colon
G\times \gg^* \times V \longrightarrow \hh^*,
\ 
\lambda(x,\xi,v)= \Phi_V(v)- \xi|_\hh.
\end{equation}
Zero is a regular value of $\lambda$, the reduced space
$E = (\TT^*G \times V)//_\lambda H$
is a complex algebraic  manifold, acquires an algebraic  
symplectic
structure which we write as $\omega_E$ and, furthermore,
via the first copy of $G$, an algebraic 
hamiltonian  
$G$-action, with algebraic
momentum mapping coming from \eqref{moma1}.

Take a $K$-invariant 
hermitian inner product on $\gg$
and let $\mm$ be the orthogonal complement to $\hh$
in $\gg$. This identifies $\hh^*$ with the annihilator
$\mmo$ 
of $\mm$ in $\gg^*$
and $\mm^*$ with the annihilator $\hho$ 
of $\hh$ in $\gg^*$, 
and we thereby view $\Phi_V$
as taking values in $\gg^*$.
Then $E$ appears as  the total space of the algebraic 
vector bundle $E= G \times_H(\hho \times V) \to G/H$,
and the  algebraic momentum mapping reads
\begin{equation}
\kappa \colon G \times_ H(\hho \times V) \longrightarrow \gg^*,
\ 
[x,\xi,v] \longmapsto \Ad_x^*(\xi + \Phi_V(v)) .
\label{algmom}
\end{equation}
Furthermore, 
the zero section embedding $G/H \to E$ 
is isotropic relative to $\omega_E$.

The diagram
\begin{equation}
\begin{CD}
V @>>> G \times_ H(\hho \times V)
\\
@V{\Phi_V}VV
@VV{\kappa}V
\\
\mmo @>>> \gg^*
\end{CD}
\end{equation}
is commutative, and
the canonical injection $V \to  G \times_ H(\hho \times V)$
induces an isomorphism
\begin{equation}
\Phi_V^{-1}(0)//H \longrightarrow \kappa^{-1}(0)//G
\end{equation}
of algebraic GIT-quotients.

\subsubsection{Topology of the local model in 
terms of Kempf-Ness theory}
\label{topokn}
Write $V_0 = \Phi_V^{-1}(0)//H $ and let $\pi \colon  \Phi_V^{-1}(0) \to V_0$
denote the quotient map.
Let $\sigma_V$ be a (real) K\"ahler form on $V$
invariant under $L = H \cap K$, let $\ll$ denote the Lie algebra of $L$,
let
\begin{equation}
\mu_{\sigma_V} \colon V \longrightarrow\ll^*,\ 
x \circ \mu_{\sigma_V}(v) = \tfrac 12 \sigma_V(xv,v),\ 
v \in V,\  
\ll \ni x \colon\ll^* \to \RR,
\end{equation}
be the associated momentum mapping having the value zero at the origin,
and consider
\begin{equation}
\mu_V = (\mu_{\sigma_V},\Phi_V)\colon V \longrightarrow \ll^* \times \hh^*.
\end{equation}
The injection $\mu_V^{-1}(0) \subseteq \Phi_V^{-1}(0)$ induces a homeomorphism
$\mu_V^{-1}(0)/L \to  V_0=\Phi_V^{-1}(0)//H$.

Likewise the 
 injection $\mu_{\sigma_V}^{-1}(0) \subseteq V$ induces a homeomorphism
$\mu_{\sigma_V}^{-1}(0)/L \to  V//H$.
The left-hand side characterizes 
the topology and the right-hand side
the complex algebraic structure
of $V//H$, and the diagram
\begin{equation}
\begin{gathered}
\xymatrix{
  & \mu_{\sigma_V}^{-1}(0) \ar@{->}[rr]^{\subseteq}\ar@{->>}'[d][dd]
 & & V \ar@{->>}[dd]
\\
 \mu_V^{-1}(0) \ar@{->}[ur]^{\subseteq}
\ar@{->}[rr]_{\phantom{aaaaaaa}\subseteq}\ar@{->>}[dd]
 & &  \Phi_V^{-1}(0)\ar@{->}[ur]^{\subseteq}\ar@{->>}[dd]
\\
 & \mu_{\sigma_V}^{-1}(0)/L \ar@{->}'[r]_{\phantom {aaaa}\cong}[rr]
 & & 
V//H                                          
 \\            
 \mu_V^{-1}(0)/L \ar@{->}[rr]_{\cong}\ar@{>->}[ur]
 & &   V_0 \ar@{>->}[ur]
}
\end{gathered}
\label{topodiag}
\end{equation}
is commutative. By construction,
the domain of each inclusion written as $\subseteq$
and of each  injection  written as $\rightarrowtail$
carries the induced topology,  the range of each surjection
written as $\twoheadrightarrow$
carries the quotient topology, and the arrows labeled $\cong$
are homeomorphisms. 
Indeed, as for the topologies
of the spaces in the upper square the claim is immediate,
and the homeomorphisms
result form GIT.
Since the group $L$ is compact,
it is immediate that
$\mu_V^{-1}(0)/L$
carries the topology induced from
$\mu_{\sigma_V}^{-1}(0)/L$.
Hence $V_0$ carries the topology induced from
$V//H$.

\begin{prop}
\label{top}
A subset $U$ of $V_0=\Phi_V^{-1}(0)//H$ is open if and only if,
relative to the quotient map $\pi \colon \Phi_V^{-1}(0) \to V_0=\Phi_V^{-1}(0)//H$,
there is an $H$-saturated subset $W$ of $V$ 
such that $\pi^{-1}(V) =\Phi_V^{-1}(0) \cap W$.
\end{prop}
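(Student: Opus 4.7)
The plan is to read off the statement directly from the commutative diagram \eqref{topodiag} together with the concluding observation of Subsection \ref{topokn}, which asserts that $V_0$ carries the topology induced from $V\mathbin{/\!/}H$ via the injection $V_0\rightarrowtail V\mathbin{/\!/}H$. Throughout, I will write $q\colon V\to V\mathbin{/\!/}H$ for the GIT quotient and keep the notation $\pi\colon\Phi_V^{-1}(0)\to V_0$ of the proposition; the two maps fit into the commutative square formed by the inclusions $\Phi_V^{-1}(0)\subseteq V$ and $V_0\subseteq V\mathbin{/\!/}H$. I will interpret ``$H$-saturated subset $W$ of $V$'' in the customary sense, namely $W=q^{-1}(q(W))$; the content of the statement is that such a $W$ may moreover be taken open.

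For the forward direction, suppose $U\subseteq V_0$ is open. Since $V_0$ carries the subspace topology from $V\mathbin{/\!/}H$, there exists an open subset $U'\subseteq V\mathbin{/\!/}H$ with $U=U'\cap V_0$. Set $W=q^{-1}(U')\subseteq V$. Then $W$ is open, and it is $H$-saturated by construction. It remains to check that $\pi^{-1}(U)=\Phi_V^{-1}(0)\cap W$, which is a diagram chase: a point $v\in\Phi_V^{-1}(0)$ lies in $\pi^{-1}(U)$ if and only if its image in $V_0$ lies in $U'\cap V_0$, if and only if $q(v)\in U'$, if and only if $v\in W$.

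For the converse, assume $W\subseteq V$ is open and $H$-saturated with $\pi^{-1}(U)=\Phi_V^{-1}(0)\cap W$. Because $q$ is a GIT quotient with respect to the reductive group $H$ and $W$ is open and saturated, the image $U':=q(W)\subseteq V\mathbin{/\!/}H$ is open. A second diagram chase identifies $U$ with $U'\cap V_0$: indeed, an element $[v]\in V_0$ (with $v\in\Phi_V^{-1}(0)$) lies in $U$ precisely when $v\in\pi^{-1}(U)=\Phi_V^{-1}(0)\cap W$, which by saturation of $W$ is the same as $q(v)\in U'$. Invoking once more that $V_0$ has the subspace topology from $V\mathbin{/\!/}H$, we conclude that $U$ is open in $V_0$.

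The only step that is not purely formal is the openness of $q(W)$ in the converse direction; this is the standard fact that the GIT quotient map $q$ is open on saturated opens, which is in turn part of the analytic Kempf-Ness package recorded in \cite{MR1274117} and already invoked in Subsection \ref{hsr}. Everything else is a routine chase around diagram \eqref{topodiag} once one knows that $V_0\hookrightarrow V\mathbin{/\!/}H$ is a topological embedding, and that fact has been established immediately before the proposition. No further input is required.
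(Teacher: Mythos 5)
Your proof is correct and follows essentially the same route as the paper's: both rest on the fact, established just before the proposition via diagram \eqref{topodiag}, that $V_0$ carries the subspace topology from $V//H$, and then pull back (resp.\ push forward) open saturated sets along $q\colon V\to V//H$. The paper's own proof is a two-line version of your forward direction with the converse left implicit, so your write-up merely makes explicit the openness of $q(W)$, which indeed follows from $V//H$ carrying the quotient topology as recorded in \eqref{topodiag}.
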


\begin{proof}
A subset $U$ of $V_0=\Phi_V^{-1}(0)//H$ is open if and only if
there is a open subset $W'$ of $V//H$ 
such that $U =V_0 \cap W'$. The pre-image of $W'$ in
$V$ is $H$-saturated. This implies the claim.
\end{proof}

\subsubsection{Variation of the hamiltonian structure of
 the local model}
Let $\eta_\CC$ 
be a $G$-holomorphic symplectic structure
on $E=G \times_ H(\hho \oplus V)$ 
with momentum mapping $\mu_\CC\colon E \to \gg^*$
and suppose that the zero section embedding
 $G/H \to E$ 
is isotropic relative to $\eta_\CC$.
Proposition 2 in
\cite[\S 3.2 p.~222]{MR2230091}, 
taken up in the proof of 
\cite[Theorem 1.3]{mayrand},
says the following.

\begin{prop}
\label{varia1}
There is a $G$-equivariant biholomorphism
$\chi \colon E \to E$ such that
$\chi^*(\eta_\CC)$ and $\omega_\CC$ coincide on
the image $Z$ of the zero section embedding $\GG/H \to E$. \qed
\end{prop}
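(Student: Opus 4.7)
By $G$-equivariance I would reduce the problem to a single point: it suffices to construct an $H$-equivariant linear isomorphism $L$ of $T_{z_0}E$ that pulls $\eta_\CC|_{z_0}$ back to $\omega_\CC|_{z_0}$, where $z_0=[e,0,0]$ and $H$ is its stabilizer. Then the bundle automorphism $\chi$ of $E=G\times_H(\hho\oplus V)$ obtained by applying $L$ on the fiber over $[e]$ and propagating via the $G$-action will be the desired biholomorphism; it fixes the zero section $Z$ pointwise.

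For the linear set-up at $z_0$, fix a $K$-invariant hermitian splitting $\gg=\mm\oplus\hh$; then $T_{z_0}E\cong\mm\oplus\hho\oplus V$ as $H$-modules with $T_{z_0}Z=\mm$. Both $\omega_\CC|_{z_0}$ and $\eta_\CC|_{z_0}$ are non-degenerate $H$-invariant skew forms, and isotropy of $Z$ makes the $(\mm,\mm)$-block zero for both. After shifting $\mu_\CC$ by a constant so that $\mu_\CC|_Z=0$, the identity $\eta_\CC(Y_E,X)=(d\mu_\CC(X))(Y)$ evaluated at $z_0$, combined with $Y_E|_{z_0}=(\pi_\mm Y,0,0)$, forces $d\mu_\CC|_{z_0}$ to take values in $\hho\subseteq\gg^*$ and prescribes the entire $(\mm,\cdot)$ row of $\eta_\CC|_{z_0}$ in terms of $d\mu_\CC|_{z_0}\colon\hho\oplus V\to\hho$. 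The same calculation for the model $(\omega_\CC,\kappa)$ gives $d\kappa|_{z_0}=\pi_\hho$ (using $d\Phi_V|_0=0$), so its $(\mm,\hho)$-block is the natural pairing and its $(\mm,V)$-block vanishes. The remaining ambiguity therefore lives in the $(\hho\oplus V)\times(\hho\oplus V)$ inner block.

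Next I would seek $L$ as $\mathrm{Id}_\mm\oplus L_N$ with an $H$-equivariant linear automorphism $L_N$ of $\hho\oplus V$. The condition $L^*(\eta_\CC|_{z_0})=\omega_\CC|_{z_0}$ then splits into (i) linear conditions matching the $(\mm,\cdot)$ row, which determine some blocks of $L_N$ in terms of the others, and (ii) a matrix equation $L_N^{\mathrm T}E'L_N=E$, where $E'$ and $E$ denote the restrictions of $\eta_\CC|_{z_0}$ and $\omega_\CC|_{z_0}$ to $\hho\oplus V$. The $V\times V$ component of (ii) amounts to matching two $H$-invariant non-degenerate skew forms on $V$; over $\CC$ the group $\mathrm{GL}(V)^H$ acts transitively on this set by a Schur-type argument applied to the isotypic decomposition, so this step is solvable. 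The remaining off-diagonal and $\hho\times\hho$ components of $L_N$ are then determined by linear equations whose solvability follows from the non-degeneracy of $E'$ and of $E$.

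The hard part will be the simultaneous solvability of the inner-block matching while preserving $H$-equivariance. This is essentially the content of \cite[Proposition~2]{MR2230091}, an equivariant Darboux-type lemma in the algebraic category; it transfers unchanged to the holomorphic setting because the only ingredients are $H$-invariant polynomial identities together with the algebraic closedness of $\CC$---no holomorphic integration or cohomology vanishing is needed beyond this pointwise linear statement. Globalizing $L$ via $\chi([g,\xi,v])=[g,L_N(\xi,v)]$ then produces the sought $G$-equivariant biholomorphism.
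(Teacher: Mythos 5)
Your reduction to a linear problem at the point $z_0=[e,0,0]$ is the right first move, and your remark that the whole statement is pointwise\--linear---so that the algebraic result of Losev transfers verbatim to the holomorphic category---is exactly the paper's position: the paper offers no argument of its own, it simply quotes \cite[Proposition~2]{MR2230091} as taken up by Mayrand. The genuine gap is in your ansatz. You insist that $\chi$ be fiber\--preserving, $\chi([g,\xi,v])=[g,L_N(\xi,v)]$, so that $d\chi|_{z_0}=\Id_\mm\oplus L_N$ is block\--diagonal with respect to $\TT_{z_0}E\cong\mm\oplus N$, $N=\hho\oplus V$. Your condition (ii) then reads $L_N^{\mathrm T}B'L_N=B$ with $B=\omega_E|_{N\times N}=0\oplus\omega_V$ and $B'=\eta_\CC|_{N\times N}$; since $L_N$ is invertible this forces $\mathrm{rank}\,B'=\mathrm{rank}\,B=\dim V$. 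But none of the hypotheses ($G$\--invariance, closedness, nondegeneracy on all of $\TT_{z_0}E$, isotropy of $Z$, existence of $\mu_\CC$) controls the rank of $\eta_\CC$ restricted to $N$: the $(\hho,\hho)$ and $(\hho,V)$ blocks of $\eta_\CC|_{z_0}$ need not vanish. Concretely, for $G=(\CC^*)^2$, $H=\{e\}$, $V=0$, $E=\TT^*G\cong G\times\gg^*$, the form $\eta_\CC=dq_1\wedge dp_1+dq_2\wedge dp_2+dp_1\wedge dp_2$ (left\--trivialized coordinates) is $G$\--invariant, closed, symplectic, has isotropic zero section and admits a momentum mapping, yet restricts nondegenerately to the fiber $N=\gg^*$, whereas $\omega_E$ restricts to zero there; no $L_N$ whatsoever solves your equation. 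A related slip: your \lq\lq $V\times V$ component\rq\rq\ argument tacitly assumes $\eta_\CC|_{V\times V}$ nondegenerate, which is also not guaranteed; the canonically nondegenerate subspace is $W=\ker(d\mu_\CC|_{z_0})\cap N$, an $H$\--submodule isomorphic to $V$ by complete reducibility, and your Schur argument over $\CC$ should be run on $W$.

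The missing ingredient is a $G$\--equivariant automorphism that shears the fibers into the group directions, e.g.\ $[g,\xi,v]\mapsto[g\exp(C(\xi,v)),\xi,v]$ with $C\colon\hho\oplus V\to\mm$ an $H$\--equivariant linear map (well defined on $G\times_H(\hho\oplus V)$ precisely because $C$ is $H$\--equivariant, and biholomorphic with inverse given by $-C$). Its differential at $z_0$ has an off\--diagonal block $N\to\mm$, and, given your row condition $A\circ L_N=A_0$, the resulting correction to the inner block of the pulled\--back form is $(n,n')\mapsto\langle Cn,\pi_{\hho}n'\rangle-\langle Cn',\pi_{\hho}n\rangle$, where $\pi_\hho$ is the projection of $N$ onto $\hho$ and $\langle\cdot,\cdot\rangle$ the canonical $\mm\times\hho$ pairing. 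This can be chosen to kill exactly the $(\hho,\hho)$ and $(\hho,V)$ discrepancies---and only those, the $(V,V)$ one being handled by your Schur step on $W$. With this extra shear composed with a fiber\--preserving $L_N$ sending $V$ onto $W$, the rest of your argument goes through; this composite, non\--fiber\--preserving $\chi$ is what the cited Proposition~2 of Losev actually produces.
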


The holomorphic extension 
\cite[\S 3.3 p.~223]{MR2230091},  \cite[Section 3]{mayrand} 
of the Darboux-Weinnstein theorem
\cite[Theorem 4.1, Corollary 4.3]{MR0286137},
reproduced 
in \cite[Theorem 22.1]{MR770935}, \cite[Theorem 6]{MR1486529},
\cite[7.3.1 Theorem]{MR2021152},  
implies the following.

\begin{prop}
\label{varia2}
Suppose that the restrictions of $\eta_\CC$
and $\omega_\CC$ to  the image $Z$ of the zero section embedding
$G/H \to E$ coincide.
Then there are  open $G$-invariant neighborhoods 
$U_0$ and $U_1$
of $Z$ in $E$ and a $G$-equivariant
biholomorphism $\vartheta \colon U_0 \to U_1$ such that
$\vartheta^*(\eta_\CC) = \omega_{E}$
and $\vartheta|_Z = \Id_Z$. \qed
\end{prop}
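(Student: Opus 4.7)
The plan is a $G$-equivariant holomorphic Moser homotopy argument, of exactly the sort indicated in the references immediately preceding the statement.

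First I would form the linear interpolation $\omega_t = (1-t)\omega_E + t\eta_\CC$, $t \in [0,1]$, a smooth family of $G$-invariant closed holomorphic $2$-forms on $E$. By hypothesis $\omega_t|_z = \omega_E|_z$ for every $z \in Z$, so $\omega_t$ is non-degenerate at each point of $Z$. Since $Z = G/H$ is a single $G$-orbit and each $\omega_t$ is $G$-invariant, non-degeneracy along $Z$ reduces to non-degeneracy at one point; openness of non-degeneracy together with $G$-equivariance and compactness of $[0,1]$ then produces a $G$-invariant open neighborhood $U$ of $Z$ in $E$ on which $\omega_t$ is symplectic for every $t \in [0,1]$.

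Next I would construct a $G$-invariant holomorphic $1$-form $\alpha$ on $U$ with $d\alpha = \eta_\CC - \omega_E$ and $\alpha|_Z = 0$. Since $E = G \times_H(\hho \oplus V) \to G/H$ is a $G$-equivariant holomorphic vector bundle, the fiberwise scaling $r_s \colon E \to E$, $r_s([x,\xi,v]) = [x,s\xi,sv]$, is a $G$-equivariant holomorphic homotopy with $r_1 = \Id_E$ and $r_0$ equal to the projection $E \to Z$ composed with the zero-section inclusion; let $\xi_s$ denote the time-dependent vector field generating it. Applied to the closed $G$-invariant holomorphic form $\beta = \eta_\CC - \omega_E$, which vanishes at every point of $Z$, the Cartan homotopy formula gives
\[
\alpha = \int_0^1 r_s^*\bigl(\iota_{\xi_s}\beta\bigr)\,ds.
\]
This $\alpha$ is holomorphic, $G$-invariant, a primitive of $\beta$ (because $r_0^*\beta = 0$ by pointwise vanishing of $\beta$ on $Z$), and vanishes on $Z$ since $r_s$ fixes $Z$ and $\beta_z = 0$ for $z \in Z$.

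The third step is the actual Moser argument. I would define the time-dependent holomorphic $G$-equivariant vector field $X_t$ on $U$ by $\iota_{X_t}\omega_t = -\alpha$; because $\alpha|_Z = 0$, so is $X_t|_Z$, and $Z$ consists of rest points. Shrinking $U$ to a $G$-invariant open neighborhood $U_0$ of $Z$, the flow $\vartheta_t$ of $X_t$ is defined on $U_0$ for all $t \in [0,1]$, takes values in $U$, and fixes $Z$ pointwise. The standard Moser identity
\[
\tfrac{d}{dt}\vartheta_t^*\omega_t = \vartheta_t^*\bigl(d\iota_{X_t}\omega_t + \dot\omega_t\bigr) = \vartheta_t^*\bigl(-d\alpha + d\alpha\bigr) = 0
\]
delivers $\vartheta_1^*\eta_\CC = \omega_E$, and setting $\vartheta = \vartheta_1$ and $U_1 = \vartheta(U_0)$ yields the required $G$-equivariant biholomorphism with $\vartheta|_Z = \Id_Z$.

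I expect the principal technical obstacle to be the uniformity estimate guaranteeing a single $G$-invariant $U_0$ on which the Moser flow exists for the whole interval $[0,1]$; this is not automatic because $Z = G/H$ need not be compact. The remedy is to exploit $G$-equivariance: non-degeneracy of $\omega_t$, the construction of $\alpha$, and the lower bound on flow-existence time need only be established on a transverse slice through one point of $Z$ and then $G$-translated. Once such a $U_0$ is in hand, the remainder is the routine $G$-equivariant holomorphic version of the Darboux--Weinstein theorem cited immediately before the statement.
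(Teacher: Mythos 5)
Your equivariant holomorphic Moser deformation is precisely the argument behind the holomorphic Darboux--Weinstein theorem that the paper invokes (via the references to Weinstein, Losev and Mayrand) to dispose of this proposition, so you are taking the same route and merely writing out the details the paper delegates to the literature. The argument is correct, including your identification of the only real issue---non-compactness of $Z=G/H$---and its resolution through $G$-invariance of $\omega_t$, of the primitive $\alpha$, and hence of the domain of the time-one Moser flow.
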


\subsubsection{Affine complex structure on $V_0$} The affine coordinate ring
$\CC[V_0]$ 
of the algebraic GIT-quotient
$V_0=\Phi_V^{-1}(0)//H$ is the ring
$\CC[\Phi_V^{-1}(0)]^H$  of $H$-invariants in the affine coordinate ring
$\CC[\Phi_V^{-1}(0)]$ of the complex algebraic set $\Phi_V^{-1}(0)$ and, 
accordingly,
\begin{equation}
V_0 = \Hom_\Alg(\CC[\Phi_V^{-1}(0)]^H,\CC)
=\mathrm{Spec}(\CC[\Phi_V^{-1}(0)]^H).
\end{equation}
Thus a complex-valued function $f$ on $V_0$
 belongs to  $\CC[V_0]$
if and only if there exists a  function
$\widehat f$ in the affine coordinate ring $\CC[V]$ of $V$
 that renders a diagram of the kind
\begin{equation}
\begin{CD}
\Phi^{-1}_V(0) @>{\subseteq}>> V
\\
@V{\pi}VV
@VV{\widehat f}V
\\
V_0
@>>{f}>
\CC
\end{CD}
\end{equation}
commutative.
While the composite $f \circ \pi$ is $H$-invariant,
there is no reason for
$\widehat f$ to be $H$-invariant.
 
\subsubsection{Complex analytic structure on $V_0$} 
\label{compans}
The sheaf $\OO_{V_0}$
of germs of holomorphic functions on $V_0$ arises as follows:
Let $U$ be an open set in $V_0$; then $\pi^{-1}(U)$ is open in
$\Phi^{-1}_V(0)$,
that is, for some open set $U'$ in $V$,
the subset $\pi^{-1}(U)$ coincides with 
$\Phi^{-1}_V(0) \cap U'$;
 a complex-valued function $f$ on $U$
is holomorphic, i.e., belongs to  
 $\OO_{V_0}(U)$,
if and only if there exists a holomorphic function
$\widehat f$ on  $U'$ that renders a diagram of the kind
\begin{equation}
\begin{CD}
\pi^{-1}(U) @>{\subseteq}>> U'
\\
@V{\pi}VV
@VV{\widehat f}V
\\
U
@>>{f}>
\CC
\end{CD}
\label{diag2}
\end{equation}
commutative.
While the composite $f \circ \pi$ is $H$-invariant,
there is, at first, no reason for
$\widehat f$ to be $H$-invariant.

By Proposition \ref{top},
we can take $U'$ to be $H$-saturated, however.
Then rendering $\widehat f$
invariant under the maximal compact subgroup $L$ of $H$
yields an $H$-invariant extension: the function $f^\sharp$
which the indentity
\begin{equation}
 f^\sharp(v) = \int_K\widehat f(x v) dx
\end{equation}
characterizes
is an $H$-invariant 
holomorphic function
on $V$ rendering, with $f^\sharp$ substituted for
$\widehat f$, diagram  \eqref{diag2} commutative. 
This establishes the following:

\begin{prop}
\label{epi}
Under the circumstances of Proposition {\rm \ref{top}},
the canonical restriction morphism ${\OO_V(W)^H \to\OO_{V_0}(U)}$ 
of algebras is an epimorphism. \qed
\end{prop}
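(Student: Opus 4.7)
The plan is to follow the averaging argument sketched in the paragraph preceding the statement: starting from a holomorphic extension $\widehat f$ of $f\circ\pi$ to an ambient neighborhood, average it over the maximal compact subgroup $L = H \cap K$ of $H$, and then upgrade $L$-invariance to full $H$-invariance.

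First I would fix $f \in \OO_{V_0}(U)$ and, by Proposition \ref{top}, choose an $H$-saturated open set $W \subseteq V$ satisfying $\pi^{-1}(U) = \Phi_V^{-1}(0) \cap W$. The main technical point, and what I expect to be the chief obstacle, is to produce a \emph{single, global} holomorphic extension $\widehat f \in \OO_V(W)$ of $f\circ\pi$: the definition of $\OO_{V_0}(U)$ recalled above provides a holomorphic extension only on some, possibly small, open neighborhood of $\pi^{-1}(U)$ in $V$. I would handle this by choosing $W$ to be Stein---for instance as $\pi_V^{-1}(W')$ for an appropriate Stein open neighborhood $W' \subseteq V//H$ of the image of $U$ under the inclusion $V_0 \subseteq V//H$---so that $\pi^{-1}(U) = \Phi_V^{-1}(0) \cap W$ is a closed complex analytic subspace of the Stein manifold $W$, and then invoke Cartan's Theorem B applied to the coherent ideal sheaf of $\pi^{-1}(U)$ to secure the global extension.

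Second, with $\widehat f$ at hand, set
\begin{equation}
f^\sharp(v) = \int_L \widehat f(lv)\, dl, \qquad v \in W,
\end{equation}
with Haar measure of total mass one. Because $W$ is $H$-saturated (hence $L$-saturated), the integrand is defined for all $l \in L$; because $L$ is compact and the integrand is holomorphic in $v$, the function $f^\sharp$ lies in $\OO_V(W)$; and by construction $f^\sharp$ is $L$-invariant. Because $f\circ\pi$ is already $H$-invariant, the restriction of $\widehat f$ to $\pi^{-1}(U)$ is unaffected by the averaging, so that $f^\sharp$ still maps to $f$ under the canonical restriction morphism.

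Third I would upgrade the $L$-invariance of $f^\sharp$ to $H$-invariance. For each fixed $v \in W$, the function $h \mapsto f^\sharp(hv) - f^\sharp(v)$ is holomorphic on the entire complex Lie group $H$ (thanks to $H$-saturation of $W$) and vanishes on the totally real submanifold $L$; since $H$ is the complexification of $L$ and $L$ meets every connected component of $H$, the identity theorem for holomorphic functions forces this difference to vanish identically on $H$. Hence $f^\sharp \in \OO_V(W)^H$, and the map $\OO_V(W)^H \to \OO_{V_0}(U)$ is surjective. As flagged above, the main obstacle is the global-extension step via Steinness of $W$ and Cartan's Theorem B; the averaging and the $L$-to-$H$ upgrade are then standard consequences of the reductivity of $H$.
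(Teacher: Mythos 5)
Your proof has the same core as the paper's: extend $f\circ\pi$ holomorphically to an $H$-saturated ambient open set, average over the maximal compact subgroup $L$, and observe that the result is still an extension of $f\circ\pi$ (since $f\circ\pi$ is $H$-invariant) and is automatically $H$-invariant, not merely $L$-invariant. The paper states this last upgrade without the identity-theorem justification you supply (and its formula $\int_K\widehat f(xv)\,dx$ is to be read as an integral over $L$), so on that point you are simply more explicit. Where you genuinely depart from the paper is the global-extension step. The paper never invokes Cartan's Theorem~B: its definition of $\OO_{V_0}(U)$ in \S\ref{compans} already posits a single holomorphic $\widehat f$ on an open set $U'$ with $\pi^{-1}(U)=\Phi_V^{-1}(0)\cap U'$, and it then appeals to Proposition~\ref{top} to ``take $U'$ to be $H$-saturated''; the entire content of the paper's proof is the averaging. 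Your instinct that the passage from some extension domain to the specific $H$-saturated $W$ requires an argument is sound, but the mechanism you propose cannot work for every open $U$: if $W'\subseteq V//H$ is Stein and $W'\cap V_0=U$, then $U$, being a closed analytic subset of the Stein space $W'$, is itself Stein, and an arbitrary open subset of $V_0$ need not be Stein (remove a point from a smooth part of $V_0$ of complex dimension at least two). So your argument, as written, establishes the surjectivity only for those $U$ cut out by Stein open sets of $V//H$; these do form a basis of the topology of $V_0$, since $V//H$ is affine, so you obtain the statement at the level of the sheaf, but for a general $U$ you must either fall back on the paper's definitional convention that a global $\widehat f$ on an $H$-saturated $U'$ exists, or reformulate the proposition accordingly.
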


\subsubsection{Algebraic Poisson structure}
The complex symplectic form $\omega_V$ on $V$ determines
an $H$-invariant algebraic Poisson bracket 
$\pbra$ on $\CC[V]$
and hence 
algebraic Poisson bracket 
$\pbra$ on $\CC[V]^H =\CC[V//H]$.
Let $\delta_V \colon \hh \to \CC[V]$
denote the comomentum
which the momentum mapping \eqref{moma22} 
induces, and let $I_{\Phi_V}$ be the ideal in 
$\CC[V]$ which $\delta_V (\hh) \subseteq \CC[V]$
generates.
By construction, the vanishing ideal
$I({\Phi^{-1}_V(0)})$ of the algebraic set $\Phi^{-1}_V(0)$
is the radical 
$\sqrt{I_{\Phi_V}}$ of the ideal
$I_{\Phi_V}$ in $\CC[V]$.
The paper \cite{MR1031826} explores the situation
in the real setting in great detail.

We know from the theory of constrained systems
that the ideal $I_{\Phi_V}^H$
of $H$-invariants is a Poisson ideal in $\CC[V]^H$.
By a theorem in \cite{MR618321}, 
the radical of an ideal of polynomials
closed under Poisson bracket
is also closed under Poisson bracket.
The quotient algebra
 $\CC[V]^H /(I({\Phi^{-1}_V(0)}))^H$
therefore presumably
yields a Poisson algebra of Zariski-continuous functions
on $V_0$.
Details remain to be checked.
Proposition \ref{holopois} below implies
that the ideal $(I({\Phi^{-1}_V(0)}))^H$ is a Poisson ideal.

\subsubsection{Holomorphic Poisson structure}
The complex symplectic form $\omega_V$ on $V$ 
induces a holomorphic Poisson structure $\pbra_{V_0}$ 
on $\OO_{V_0}$ as follows:
Let $U$ be open in $V_0$.
By Proposition \ref{top}, there is an $H$-saturated
open set $W$ in $V$ such that $\pi^{-1}(U)= \Phi_V^{-1}(0)\cap W$.

\begin{prop}
\label{holopois}
The symplectic Poisson structure
$\pbra_W$
on $\OO_V(W)$ induces a
Poisson structure $\pbra_U$ on
$\OO_{V_0}(U)$.
\end{prop}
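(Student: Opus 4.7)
The plan is to lift the bracket from the $H$-invariants on $W$ by exhibiting a Poisson ideal whose quotient, via Proposition~\ref{epi}, is $\OO_{V_0}(U)$. Because $H$ acts on $(V,\omega_V)$ by symplectic transformations, $\OO_V(W)^H$ is a Poisson subalgebra of $(\OO_V(W),\pbra_W)$. Proposition~\ref{epi} gives a surjective restriction morphism of algebras $\OO_V(W)^H \twoheadrightarrow \OO_{V_0}(U)$, whose kernel is
\[
\mathcal I \;=\; \{h\in\OO_V(W)^H : h|_{\Phi_V^{-1}(0)\cap W}=0\}.
\]
Hence it suffices to show that $\mathcal I$ is a Poisson ideal in $\OO_V(W)^H$; the bracket $\pbra_U$ is then forced on the quotient, and antisymmetry, Leibniz, and Jacobi are inherited automatically from $\pbra_W$.

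The key step is to show that, for every $f\in\OO_V(W)^H$, the Hamiltonian vector field $X_f$ is tangent to the analytic subset $\Phi_V^{-1}(0)\cap W$. For $\xi\in\hh$ the comomentum component $\delta_V(\xi)\in\OO_V(W)$ generates the fundamental vector field $\xi_V$, so $\{f,\delta_V(\xi)\}=X_f(\delta_V(\xi))=-\xi_V(f)=0$ by the $H$-invariance of $f$. Thus every generator $\delta_V(\xi)$ of the ideal $I_{\Phi_V}$ is constant along the local flow of $X_f$; consequently the local biholomorphisms produced by this flow preserve $\Phi_V^{-1}(0)\cap W$ pointwise as a set. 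Given $h\in\mathcal I$ and $p\in\Phi_V^{-1}(0)\cap W$, differentiating $h$ along this flow at $p$ yields $X_f(h)(p)=0$, so $\{f,h\}=X_f(h)$ vanishes on $\Phi_V^{-1}(0)\cap W$. As $\{f,h\}$ is manifestly $H$-invariant (the bracket is internal to $\OO_V(W)^H$), we have $\{f,h\}\in\mathcal I$, which is what was needed.

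The step where I would expect to have to be most careful is the tangency argument that bridges the ideal-theoretic identity $X_f(\delta_V(\xi))=0$ and the vanishing of $X_f(h)$ for arbitrary $h$ in the (generally larger) radical vanishing ideal of $\Phi_V^{-1}(0)\cap W$; this is the holomorphic avatar of the theorem of \cite{MR618321} cited earlier in the section, and the cleanest way to dispatch it is via the flow argument above, which sidesteps any direct manipulation of the radical. Once the Poisson ideal property is in place, two minor checks complete the proof: that the induced bracket $\pbra_U$ does not depend on the choice of $H$-saturated $W$ with $\pi^{-1}(U)=\Phi_V^{-1}(0)\cap W$ (two such choices agree on their intersection, and the averaging in the proof of Proposition~\ref{epi} is compatible with restriction), and that the brackets on varying opens $U$ glue to a genuine sheaf of Poisson structures on $\OO_{V_0}$.
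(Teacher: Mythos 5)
Your proof is correct and follows essentially the same route as the paper: reduce via the surjection of Proposition \ref{epi} to showing that the $H$-invariant functions vanishing on $\Phi_V^{-1}(0)$ form a Poisson ideal, and establish this by observing that the components of $\Phi_V$ are conserved along the holomorphic flow of $X_f$ for $H$-invariant $f$, so the flow preserves $\Phi_V^{-1}(0)$ and differentiating $h$ along it gives $\{f,h\}|_{\Phi_V^{-1}(0)}=0$. Your closing remarks on independence of the choice of $W$ and gluing over varying $U$ are sensible housekeeping the paper leaves implicit, but they do not change the substance of the argument.
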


\begin{proof}
Up to a change of notation, we must show that
the symplectic Poisson structure
$\pbra_V$
on the ring
$\OO_V(V)$ of entire functions on $V$ induces a
Poisson structure $\pbra_{V_0}$ on the ring
$\OO_{V_0}(V_0)$ of holomorphic functions
on $V_0$.

Since the symplectic form $\omega_V$ on $V$ is $H$-invariant,
the symplectic Poisson structure $\pbra_V$
on $\OO_V(V)$ induces a Poisson structure
on the subalgebra $\OO_V(V)^H$
of $H$-invariants.
By Proposition \ref{epi}, the canonical restriction morphism
$\OO_V(V)^H \to \OO_{V_0}(V_0)$ of algebras is an epimorphism.
The argument for
\cite[Theorem 1 p.~35]{MR1123275}
shows that
the ideal 
of $H$-invariant functions in $\OO_V(V)$ that vanish on
$\Phi_V^{-1}(0)$ is a Poisson ideal
in $\OO_V(V)^H$. Since this is, perhaps, not entirely obvious, we 
reproduce the details in the present holomorphic setting:

Let $f$ and $h$ be $H$-invariant entire holomorphic functions
on $V$
 with
$h|_{\Phi_V^{-1}(0)}=0$ and let $q \in \Phi_V^{-1}(0)$.
Thus $\Phi_V(q) =0$.
Write the holomorphic hamiltonian vector field
of $f$ as $X_f$ and,
for $Y \in \hh$, let $Y_V$ denote the linear  holomorphic
(algebraic)
vector field on $V$ which $Y$ induces.
We must show that
\begin{equation}
\{f, h\}_V(q) = - (X_f h)(q) = 0.
\label{show}
\end{equation}
Since
$f$ is $H$-invariant,
\begin{align*}
\{Y \circ \Phi_V,h\}_V &= -Y_Vf =0,\quad
\text{for}\ Y \in \hh.
\end{align*}
Consequently, for $Y \in \hh$,
the algebraic function 
$Y \circ \Phi_V$ is constant along the holomorphic integral curves of
$X_f$.
Hence the holomorphic integral curve
$z \mapsto \varphi^f_z(q)$ of $X_f$ 
($z$ in a neighborhood of $0 \in \CC$)
lies in
$\Phi_V^{-1} \Phi_V(q)= \Phi_V^{-1}(0)$.
Differentiating with respect to the variable $z$ and evaluating at
$z=0$
we find
\eqref{show}.
\end{proof}
The following is an immediate consequence of Proposition \ref{holopois}.

\begin{thm}
\label{holopois2}
Let $(V,\omega_\CC)$
be a complex symplectic representation 
of a complex reductive Lie group $H$, let $\sigma_V$
be a (real) K\"ahler form on $V$ invariant under a maximal compact 
subgroup $L$ of $H$,
let
$\mu_{\sigma_V} \colon V \to \ll^*$ and $\Phi_V\colon V \to \hh^*$
denote the associated momentum mappings, and let
$V_0=(\mu_{\sigma_V}^{-1}(0)\cap\Phi_V^{-1}(0))/L\cong \Phi_V^{-1}(0)//H$,
endowed with the reduced complex analytic structure
$\OO_{V_0}$ discussed in $\S$ {\rm \ref{compans}}.
The holomorphic Poisson structure $\pbra_V$ 
on $\OO_V$ induces a
Poisson structure $\pbra_{V_0}$ on
$\OO_{V_0}$. \qed
\end{thm}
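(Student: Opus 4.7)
The plan is to upgrade the (implicitly global) statement of Proposition~\ref{holopois} to a sheaf-theoretic assertion. For each open set $U \subseteq V_0$, Proposition~\ref{top} supplies an $H$-saturated open set $W \subseteq V$ with $\pi^{-1}(U) = \Phi_V^{-1}(0) \cap W$, and the argument of Proposition~\ref{holopois}, applied verbatim to the ring $\OO_V(W)^H$ in place of $\OO_V(V)^H$, yields a bracket $\pbra_U$ on $\OO_{V_0}(U)$. The remaining task is therefore to check that this family of brackets is independent of the auxiliary choice of $W$ and is compatible with restriction maps, so that it assembles into a single holomorphic Poisson structure on the sheaf $\OO_{V_0}$.

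For independence of $W$, suppose $W_1$ and $W_2$ both lift $U$ in the sense above. Then $W_1\cap W_2$ is again $H$-saturated and lifts $U$, so it suffices to compare the bracket computed via $W_1$ with that computed via $W_1\cap W_2$. Given $f,g \in \OO_{V_0}(U)$, Proposition~\ref{epi} furnishes $H$-invariant lifts $\widehat f,\widehat g \in \OO_V(W_1)^H$; their restrictions to $W_1\cap W_2$ are $H$-invariant lifts of $f,g$ over $W_1\cap W_2$, and since the holomorphic Poisson bracket on $\OO_V$ is local, the two resulting definitions of $\{f,g\}_U$ coincide on $\pi^{-1}(U)$, hence on $U$. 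Restriction compatibility for inclusions $U'\subseteq U$ of opens in $V_0$ is analogous: shrink $W$ to an $H$-saturated $W'\subseteq W$ lifting $U'$, restrict the $H$-invariant lifts to $W'$, and invoke locality of $\pbra_V$ once more. This shows that $\{\pbra_U\}$ defines a morphism of sheaves and hence a holomorphic Poisson structure $\pbra_{V_0}$ on $\OO_{V_0}$.

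The conceptual core of each of these verifications is the Poisson-ideal property established inside the proof of Proposition~\ref{holopois}: for every $H$-saturated open $W\subseteq V$, the ideal of $H$-invariant holomorphic functions on $W$ that vanish on $\Phi_V^{-1}(0)\cap W$ is a Poisson ideal in $\OO_V(W)^H$. I expect this to be the main obstacle only insofar as one has to confirm that the original argument (which uses that $H$-invariance of $f$ makes the components of $\Phi_V$ constant along the holomorphic flow of $X_f$) is genuinely local on $V$, so that it goes through on each $H$-saturated $W$ without modification; this is the case because the flow argument is pointwise and each $W$ is already assumed to be $H$-invariant. Once this is noted, the well-definedness in the two bullets above follows formally, and the theorem is proved.
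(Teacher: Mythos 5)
Your argument is essentially the paper's own: Theorem~\ref{holopois2} is derived there as an immediate consequence of Proposition~\ref{holopois} (whose proof is precisely the flow/Poisson-ideal argument you cite), combined with Propositions~\ref{top} and~\ref{epi}. The independence-of-$W$ and restriction-compatibility checks you spell out are exactly the routine sheaf-theoretic verifications the paper leaves implicit, and you carry them out correctly.
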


\subsubsection{Hyperk\"ahler case}
Let $V \cong \mathbb H^n$ ($n \geq 1$) be a quaternionic vector space.
Let $\mathsf I$, $\mathsf J$, $\mathsf K$
denote three complex structures
that behave like quaternions (generate the quaternion group of order eight) 
and
generate the quaternionic structure, and let $\hinn$ 
be a real hyperk\"ahler metric
(i.e., $\hinn$ renders $\mathsf I$, $\mathsf J$, $\mathsf K$ skew).
This turns $V$ into a hyperk\"ahler manifold
with K\"ahler forms 
$\omega_{\mathsf I}(\cdot,\cdot) =\langle \mathsf I \cdot,\cdot \rangle$,
$\omega_{\mathsf J}(\cdot,\cdot) =\langle \mathsf J \cdot,\cdot \rangle$, 
$\omega_{\mathsf K}(\cdot,\cdot) =\langle \mathsf K \cdot,\cdot \rangle$.
Define
\begin{align}
\Phi_{\mathsf I}\colon V \to \ll^*,\quad x\circ \Phi_{\mathsf I}(v) &= \tfrac 12 \omega_{\mathsf I}(xv,v), 
\\
\Phi_{\mathsf J}\colon V \to \ll^*,\quad x\circ \Phi_{\mathsf J}(v) &= \tfrac 12 \omega_{\mathsf J}(xv,v), 
\\
\Phi_{\mathsf K}\colon V \to \ll^*,\quad x\circ\Phi_{\mathsf K}(v)&= \tfrac 12 \omega_{\mathsf K}(xv,v).
\end{align}

\begin{thm}
\label{hyper1}
Let $L$ be a compact Lie group acting linearly on $V$
and preserving the linear hyperk\"ahler structure
$\hinn$, $\mathsf I$, $\mathsf J$, $\mathsf K$.
The three complex structures  $\mathsf I$, $\mathsf J$, $\mathsf K$
determine,
on the hyperk\"ahler quotient $V_0 =\Phi^{-1}(0)/L$
relative to the hyperk\"ahler momentum mapping
\begin{equation}
\Phi\colon V \longrightarrow \ll^* \otimes \RR^3,\ (x_1,x_2,x_3)\circ \Phi(v)=
\tfrac 12
(
\omega_{\mathsf I}(x_1v,v), 
\omega_{\mathsf J}(x_2v,v),
\omega_{\mathsf K}(x_3v,v)
),
\end{equation}
 three respective complex analytic structures $\OO_{\mathsf I}$,
$\OO_{\mathsf J}$, $\OO_{\mathsf K}$, and the corresponding K\"ahler forms
on $V$ determine three pairwise compatible real Poisson structures
$\pbra_{\mathsf I}$,
$\pbra_{\mathsf J}$, $\pbra_{\mathsf K}$
on, respectively
 $\OO_{\mathsf I}$,
$\OO_{\mathsf J}$, $\OO_{\mathsf K}$,
such that
\begin{equation}
\left(\OO_{\mathsf I},\pbra_{\mathsf J}+i\pbra_{\mathsf K}\right), \quad
\left(\OO_{\mathsf J},\pbra_{\mathsf K}+i\pbra_{\mathsf I}\right), \quad
\left(\OO_{\mathsf K},\pbra_{\mathsf I}+i\pbra_{\mathsf J}\right)
\label{holopoisson}
\end{equation}
are holomorphic Poisson structures on $V_0$.
These generate a sphere of  holomorphic Poisson structures on $V_0$.
\end{thm}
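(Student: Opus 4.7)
The plan is to obtain the three structures on $V_0$ in the statement by three parallel applications of Theorem \ref{holopois2}, one for each distinguished complex structure, supplemented by the fact that, on each open stratum of the orbit-type decomposition, flat hyperkähler reduction produces an ordinary smooth hyperkähler manifold with the expected pairwise compatible Poisson structures and holomorphic symplectic combinations.

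I would first fix the complex structure $\mathsf I$ on $V$ and observe that $\omega_{\mathsf J} + i\omega_{\mathsf K}$ is an $\mathsf I$-holomorphic symplectic form on $V$, equivariant for the holomorphic action of the complexification $H = L^{\CC}$, with holomorphic moment map $\Phi_{\mathsf J} + i\Phi_{\mathsf K}$; the real Kähler form $\omega_{\mathsf I}$ with moment map $\Phi_{\mathsf I}$ plays the role of $(\sigma_V, \mu_{\sigma_V})$ in Theorem \ref{holopois2}. That theorem equips the space
\begin{equation*}
(\Phi_{\mathsf I}^{-1}(0) \cap (\Phi_{\mathsf J}+i\Phi_{\mathsf K})^{-1}(0))/L = \Phi^{-1}(0)/L = V_0
\end{equation*}
with a complex analytic structure sheaf $\OO_{\mathsf I}$ and a holomorphic Poisson bracket on $\OO_{\mathsf I}$ that I would label $\pbra_{\mathsf J} + i\pbra_{\mathsf K}$. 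Cycling the labels produces $(\OO_{\mathsf J}, \pbra_{\mathsf K} + i\pbra_{\mathsf I})$ and $(\OO_{\mathsf K}, \pbra_{\mathsf I} + i\pbra_{\mathsf J})$, all on the same underlying topological space $V_0$ via the Kempf-Ness identification of Subsubsection \ref{topokn}.

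Next I would construct the three real Poisson structures $\pbra_{\mathsf I}$, $\pbra_{\mathsf J}$, $\pbra_{\mathsf K}$ on $C^\infty(V_0)$ by Sjamaar-Lerman stratified symplectic reduction applied to each Kähler form $\omega_a$ ($a \in \{\mathsf I, \mathsf J, \mathsf K\}$) on the joint zero locus of the other two moment maps. On each stratum, flat hyperkähler reduction yields an ordinary smooth hyperkähler manifold, so the three brackets are automatically pairwise compatible and their combinations realize the three holomorphic Poisson brackets of the previous paragraph in the form \eqref{holopoisson}. These stratum-wise identities would then be extended to all of $V_0$ using the fact that, by Proposition \ref{epi} and the averaging construction of Subsubsection \ref{compans}, each holomorphic germ on $V_0$ lifts to an $H$-invariant holomorphic function on a saturated open set in $V$, where the hyperkähler Poisson-bracket identities are immediate.

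For the sphere of holomorphic Poisson structures, any unit vector $(a_1, a_2, a_3) \in S^2$ determines an $L$-invariant complex structure $I_a = a_1\mathsf I + a_2\mathsf J + a_3\mathsf K$ on $V$ compatible with the hyperkähler metric; choosing a perpendicular pair $(I_b, I_c)$ forming an oriented orthonormal frame with $I_a$, the construction above applied to $(I_a, I_b, I_c)$ yields a complex analytic structure and a holomorphic Poisson bracket on $V_0$ that depend smoothly on $a \in S^2$. I expect the main technical point to be ensuring that the real Poisson brackets coming from the three Sjamaar-Lerman reductions coincide with the imaginary parts read off from the three holomorphic Poisson brackets produced by Theorem \ref{holopois2}; this compatibility reduces on smooth strata to the standard hyperkähler identification of $\omega_{\mathsf J} + i\omega_{\mathsf K}$ with the $\mathsf I$-holomorphic symplectic form, and at singular points is handled by the averaging and density arguments of Subsubsection \ref{compans}.
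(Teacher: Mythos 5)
Your proposal follows the paper's proof essentially verbatim in its core: the paper likewise obtains each of the three holomorphic Poisson structures by applying Theorem \ref{holopois2} to $V$ viewed as an $L^\CC$-hamiltonian holomorphic symplectic K\"ahler manifold relative to $(\mathsf I,\omega_{\mathsf I},\omega_{\mathsf J}+i\omega_{\mathsf K},\Phi_{\mathsf J}+i\Phi_{\mathsf K})$ and then cyclically permuting $\mathsf I$, $\mathsf J$, $\mathsf K$. Your additional stratum-wise discussion of the real brackets and of the sphere of structures is consistent with what the paper treats separately (Theorems \ref{hyper2} and \ref{poisson}), so the argument matches.
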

\begin{proof}
Relative to $(\mathsf I,\omega_{\mathsf I}, \omega_{\mathsf J}+i
\omega_{\mathsf K}, \Phi_{\mathsf J}+i
\Phi_{\mathsf K})$,
the  affine space which underlies
the vector space $V$ is an $L^\CC$-hamiltonian holomorphic symplectic
K\"ahler manifold.
Theorem \ref{holopois2}
yields the holomorphic Poisson structure
$\left(\OO_{\mathsf I},\pbra_{\mathsf J}+i\pbra_{\mathsf K}\right)$.
Now, repeat the argument with
$(\mathsf J,\omega_{\mathsf J}, \omega_{\mathsf K}+i
\omega_{\mathsf I}, \Phi_{\mathsf K}+i
\Phi_{\mathsf I})$ and
$(\mathsf K,\omega_{\mathsf K}, \omega_{\mathsf I}+i
\omega_{\mathsf J}, \Phi_{\mathsf I}+i
\Phi_{\mathsf J})$.
\end{proof}
\subsubsection{Stratification}

Return to the linear $H$-hamiltonian holomorphic symplectic K\"ahler
manifold $(V,\sigma_V,\omega_\CC,\mu_{\sigma_V},\Phi_V)$
studied earlier in this section.
The reasoning in \cite[\S 4.7]{mayrand} establishes the following.

\begin{thm}
\label{orbit}
Let $(V,\omega_\CC)$
be a complex symplectic representation 
of a complex reductive Lie group $H$, let $\sigma_V$
be a (real) K\"ahler form on $V$ invariant under a maximal compact 
subgroup $L$ of $H$,
and let
$\mu_{\sigma_V} \colon V \to \ll^*$ and $\Phi_V\colon V \to \hh^*$
denote the associated momentum mappings.
The orbit type decomposition of the quotient
$V_0=(\mu_{\sigma_V}^{-1}(0)\cap\Phi_V^{-1}(0))/L\cong \Phi_V^{-1}(0)//H$ 
is a complex Whitney stratification.
Hence the complex analytic structure 
$\OO_{V_0}$ on $V_0$ which  the complex structure of $V$ determines
and the holomorphic Poisson structure
$\pbra_{V_0}$ 
on $\OO_{V_0}$
which the complex symplectic structure $\omega_\CC$ on $V$ 
induces turns
$(V_0,\OO_{V_0},\pbra_{V_0})$ 
into a stratified holomorphic symplectic space. \qed
\end{thm}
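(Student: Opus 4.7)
The plan is to adapt Mayrand's argument in \cite[\S 4.7]{mayrand} from the hyperk\"ahler to the hamiltonian holomorphic symplectic K\"ahler setting already developed in this section. Two things have to be shown: that the orbit type decomposition of $V_0$ is a complex analytic Whitney stratification, and that on every stratum the Poisson structure $\pbra_{V_0}$ of Theorem \ref{holopois2} comes from a holomorphic symplectic form.

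First I would set up the strata. For each conjugacy class $(L_0)$ of closed subgroups of $L$, let $V_{(L_0)} \subseteq V$ be the locally closed smooth submanifold of points whose $L$-stabiliser is conjugate to $L_0$; because the $L$-action is linear, $V_{(L_0)}$ is locally modelled on the $L_0$-fixed subspace $V^{L_0}$ with the strictly larger fixed-point sets removed, hence is a complex submanifold of $V$ invariant under the complex structure. The $(L_0)$-piece of $V_0$ is defined as the image in $V_0$ of $\mu_{\sigma_V}^{-1}(0)\cap \Phi_V^{-1}(0)\cap V_{(L_0)}$. That the resulting decomposition of $V_0$ is Whitney proceeds in two steps: the orbit type decomposition of a linear action of a compact group is Whitney on $V$ by standard results, and this property is transferred to $V_0$ via the Kempf-Ness homeomorphism $\mu_V^{-1}(0)/L \cong V_0$ together with the slice theorem for compact group actions. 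Complex analyticity of the strata of $V_0$ follows from the holomorphic slice theorem cited in \S \ref{hsr}.

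Next I would install a holomorphic symplectic form on each stratum. Fix $[v] \in V_0$ with a closed $H$-orbit representative $v$ and isotropy $H_v$, and invoke Propositions \ref{varia1} and \ref{varia2} to identify an $H$-saturated neighborhood with an open piece of the algebraic local model $H \times_{H_v}(\hh_v^o \oplus U)$, in which $U$ is the complex symplectic slice representation of $H_v$. In this local model the stratum through $[v]$ corresponds to the fixed-point subspace $U^{H_v}$; because $H_v$ is reductive and $U$ is a complex symplectic $H_v$-representation, $\omega_\CC|_U$ restricts to a non-degenerate form on $U^{H_v}$, providing the desired holomorphic symplectic form on the stratum. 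The calculation in the proof of Proposition \ref{holopois}, carried out inside the slice, then shows that the restriction of $\pbra_{V_0}$ to this stratum coincides with the Poisson bracket of this symplectic form.

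The main obstacle lies in the bookkeeping attached to the holomorphic slice theorem: the slice is only $H_v$-equivariant whereas the neighborhood required is $H$-saturated, so ensuring that Whitney's condition, the complex analytic structure on strata, and the local-to-global gluing of the holomorphic symplectic forms are all simultaneously compatible demands care. This is exactly the point Mayrand works through in \cite[\S 4.7]{mayrand} in the hyperk\"ahler case; given Propositions \ref{varia1}, \ref{varia2} and Theorem \ref{holopois2}, his argument goes through \emph{mutatis mutandis}, requiring no new geometric input.
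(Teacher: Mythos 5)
Your proposal follows the same route as the paper, whose entire proof of this theorem consists of the single remark that the reasoning of \cite[\S 4.7]{mayrand} establishes it; your sketch of the orbit type pieces, the symplectic nature of the fixed subspace $U^{H_v}$ in the slice representation, and the deferral to Mayrand for the Whitney condition is a faithful elaboration of exactly that adaptation. The only imprecision is the phrase about the Whitney condition being ``transferred via the Kempf--Ness homeomorphism'' --- Whitney regularity is not a homeomorphism invariant and is instead verified on the image of a Hilbert embedding of the local quotient into some $\CC^N$ --- but since you explicitly delegate that step to \cite[\S 4.7]{mayrand}, exactly as the paper does, this does not constitute a gap relative to the paper's own argument.
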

(N.B. 
In the statement of the theorem, there is a single complex 
structure on $V$ under discussion.)

\begin{thm}
\label{hyper2}
Under the circumstances of Theorem {\rm \ref{hyper1}},
the three holomorphic Poisson structures
{\rm \eqref{holopoisson}}
on the hyperk\"ahler quotient $V_0= \Phi^{-1}(0)/L$
are compatible with the orbit type stratification
of $V_0$ and thereby yield a stratified hyperk\"ahler structure.
Moreover,  the orbit type 
 stratification of $V_0$ is a complex Whitney stratification
relative to each of $\OO_{\mathsf I}$,
$\OO_{\mathsf J}$, $\OO_{\mathsf K}$. \qed
\end{thm}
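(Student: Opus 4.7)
The plan is to combine Theorems \ref{hyper1} and \ref{orbit}, observing that the orbit type stratification is determined by the $L$-action alone, and then to verify that on each stratum the three reduced pieces of structure assemble to an ordinary hyperk\"ahler structure.

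First I would note that the orbit type decomposition of $V_0 = \Phi^{-1}(0)/L$ is determined by conjugacy classes of $L$-stabilizers in $\Phi^{-1}(0)$ and hence depends only on the $L$-action on $V$; in particular it does not depend on which of $\mathsf I$, $\mathsf J$, $\mathsf K$ we use to view $V$ as a hamiltonian holomorphic symplectic K\"ahler manifold. This provides a single candidate stratification to which each of the three structures must be shown compatible.

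Next I would apply Theorem \ref{orbit} three times, once for each of the hamiltonian holomorphic symplectic K\"ahler data used in the proof of Theorem \ref{hyper1}, namely $(\mathsf I,\omega_{\mathsf I},\omega_{\mathsf J}+i\omega_{\mathsf K},\Phi_{\mathsf J}+i\Phi_{\mathsf K})$ and its cyclic permutations in $(\mathsf I,\mathsf J,\mathsf K)$. Each application yields that the same orbit type decomposition is a complex Whitney stratification of $V_0$ relative to the corresponding complex analytic structure $\OO_{\mathsf I}$, $\OO_{\mathsf J}$, respectively $\OO_{\mathsf K}$, and that each of the three holomorphic Poisson structures of Theorem \ref{hyper1} restricts on every stratum to the holomorphic Poisson structure of a holomorphic symplectic form.

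It then remains to verify that on a fixed stratum the three pieces assemble to an ordinary hyperk\"ahler structure. A stratum arises from a single $L$-orbit type in $\Phi^{-1}(0)$, and on the corresponding orbit-type locus the $L$-action (modulo its stabilizer) is locally free, so the standard hyperk\"ahler quotient construction of Hitchin-Karlhede-Lindstr\"om-Ro\v{c}ek produces a smooth hyperk\"ahler manifold whose three K\"ahler forms are obtained from $\omega_{\mathsf I}$, $\omega_{\mathsf J}$, $\omega_{\mathsf K}$ by reduction. The compatibility of these K\"ahler forms with the three holomorphic Poisson brackets of Theorem \ref{hyper1} on this stratum follows because, in each of the three complex structures, Theorem \ref{holopois2} identifies the restriction of the holomorphic Poisson structure with the one descending from the corresponding complex symplectic form on $V$. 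By \cite[Lemma 6.8]{MR887284} the three K\"ahler forms then encode the entire hyperk\"ahler structure on the stratum.

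The main obstacle will be checking consistency across the three choices: the Poisson brackets $\pbra_{\mathsf I}$, $\pbra_{\mathsf J}$, $\pbra_{\mathsf K}$ of Theorem \ref{hyper1} are constructed by three \emph{separate} applications of Theorem \ref{holopois2}, so one must verify that on each fixed stratum they descend from a single hyperk\"ahler quotient rather than three a priori unrelated holomorphic symplectic quotients. I expect this to reduce to the identity $\Phi^{-1}(0) = \Phi_{\mathsf I}^{-1}(0)\cap\Phi_{\mathsf J}^{-1}(0)\cap\Phi_{\mathsf K}^{-1}(0)$ together with the $\mathrm{SU}(2)$-symmetry of the quaternionic structure, which cyclically permutes the three hamiltonian K\"ahler data and thereby forces the three reductions to be mutually compatible.
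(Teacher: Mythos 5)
Your plan is correct and follows essentially the same route the paper intends: the theorem is stated with no separate proof because it is the combination of Theorem \ref{hyper1} with three applications of Theorem \ref{orbit} (one per complex structure), the consistency issue being resolved exactly as you say by the fact that all three reductions are performed on the single common zero locus $\Phi^{-1}(0)=\Phi_{\mathsf I}^{-1}(0)\cap\Phi_{\mathsf J}^{-1}(0)\cap\Phi_{\mathsf K}^{-1}(0)$ with the single orbit type decomposition determined by the $L$-action. The stratum-wise assembly into an ordinary hyperk\"ahler structure is the content of the Dancer--Swann result \cite[Theorem 2.1]{MR1468352}, which the paper invokes for the same purpose in the proof of Theorem \ref{poisson}, so your appeal to the smooth hyperk\"ahler quotient construction on each orbit-type locus is the intended argument.
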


\begin{rema}
\label{realset}
{\rm
In the real setting, in \cite[1.11 Example]{MR1127479},
Sjamaar-Lerman recall that
\cite[Theorem 1 p.~35]{MR1123275}
yields the real stratified symplectic Poisson structure.
In \cite[3.1 Proposition]{MR1127479},
they establish the existence of this Poisson structure
by a pointwise reasoning involving the stratification.
In \cite[Subsection 4.8]{mayrand}, Mayrand
extends the pointwise reasoning for
\cite[3.1 Proposition]{MR1127479}
in terms of the corresponding stratification
to the complex analytic case
in the realm of hyperk\"ahler manifolds
to construct a Poisson bracket of the kind
$\pbra_{V_0}$ in  Theorem \ref{holopois2}.
The proof of Theorem \ref{holopois2} 
is independent of the stratification.
}
\end{rema}

\subsection{Local structure of the analytic quotient of the
hamiltonian 
holomorphic symplectic K\"ahler manifold
at the start
}
Return to the circumstances of Subsection \ref{hsr}.
Let $p$ be a point of  $\mu^{-1}(0)\subseteq M$. 
By an observation in
\cite[\S 2.2]{MR1274117},  the stabilizer $H_p$ of $p$ is reductive
and hence the complexification of a 
compact group.

The $G$-action on $M$ turns 
the complex symplectic vector space  $(\TT_pM,\omega_\CC)$
into a symplectic  $H_p$-representation,
the tangent space
$\gg p =\TT_p(G\cdot p) \subseteq \TT_pM$
to the $G$-orbit at $p$
is a subrepresentation, and so is
the skew-orthogonal complement 
$\gg p^{\omega_\CC} \subseteq \TT_pM$
of $\gg p$.
In view of the momentum property,
$\gg p^{\omega_\CC}= \ker (d\mu_\CC)$, 
and
$\gg p \subseteq \gg p^{\omega_\CC}$
as the annihilator of
the restriction of
$\omega_\CC$ to  $\gg p^{\omega_\CC}$.
Relative to 
the hermitian form associated with
$\omega_\RR$,
let $V_p$ denote the orthogonal complement
of 
$\gg p$ in $\gg p^{\omega_\CC}$, so that
\begin{equation}
\gg p^{\omega_\CC} =\gg p \oplus V_p 
\end{equation}
is a decomposition of $H_p$-representations.
Analogously to terminology in
\cite[Section 2]{MR1127479}, say 
$V_p$ is an {\em infinitesimal
holomorphic symplectic slice
at\/} $p$ {\em for the $G$-action on\/} $M$. 
In the terminology of
\cite[7.2.1 Definition p.~276]{MR2021152},
the complex vector space $V_p$ is a {\em symplectic normal
space\/} at $p$.
The holomorphic symplectic structure $\omega_\CC$
on $M$ induces a complex symplectic form $\omega_p$ on $V_p$,
and the stabilizer $H_p \subseteq G$
of the point $p$ of $M$ acts linearly and symplectically on $V_p$.

Write $E_p= G \times _{H_p} (\hh_p^{\mathrm o} \times V_p)$.
Combining
the holomorphic slice theorem \cite[\S 2.7 Theorem p.~292]{MR1274117},
\cite[Theorem 1.12 p.~100]{MR1314032} 
with a holomorphic version of the Darboux-Weinstein theorem 
\cite[Theorem 4.1, Corollary 4.3]{MR0286137},
reproduced 
in \cite[Theorem 22.1]{MR770935}, \cite[Theorem 6]{MR1486529},
\cite[7.3.1 Theorem]{MR2021152},
and with some
extra work,
in \cite{mayrand}, Mayrand manages to establish the 
holomorphic local normal form of the momentum mapping or,
equivalently, the
holomorphic symplectic slice theorem,
in the realm of hyperk\"ahler manifolds.
We extend this result as follows.

\begin{thm}[Holomorphic symplectic slice theorem]
\label{hololoc}
Let $G$ be a 
complex reductive Lie group,
$K$ a maximal compact subgroup, and let $(M,\omega_\CC,\mu_\CC)$
be a $G$-hamiltonian 
holomorphic symplectic manifold which carries  a
 $K$-hamiltonion K\"ahler structure
 $(M,\omega_\RR,\mu_\RR)$ as well.
For an arbitrary  point
 $p$ in $\mu_\RR^{-1}(0)\cap \mu_\CC^{-1}(0)$, 
by construction necessarily in 
$M^{\Rr}$, cf. {\rm \eqref{semistable}},
there is a $G$-saturated neighborhood $U_p$ of $p$ in 
$M^{\mu_\RR-\mathrm ss}$,
a $G$-saturated neighborhood $U_p'$ 
in $G\times_{H_p} (\hho_p \times V_p)$
of 
the image ($\cong G/H_p$) of the zero section of the vector bundle 
$E_p=G\times_{H_p} (\hho_p \times V_p)\to G/H_p$, and an
isomorphism
\begin{equation}
(U'_p,\omega_{E_p},\kappa_p)
\longrightarrow
(U_p,\omega_\CC,\mu_\CC)
\label{isoham}
\end{equation}
of $G$-hamiltonian complex manifolds
mapping the point $[e,0,0]$ 
of $G\times_{H_p} (\hho_p \times V_p)$
(which the point $(e,0,0)$ of $G\times \hho_p \times V_p$ represents)
to $p$.
\end{thm}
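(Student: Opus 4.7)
The strategy, following Mayrand's treatment in the hyperkähler setting \cite[Theorem 1.3]{mayrand}, has three stages: apply the holomorphic slice theorem to identify $M$ near $p$ with $E_p$ as a $G$-manifold; align the resulting holomorphic symplectic form with the model $\omega_{E_p}$ near the zero section using Propositions \ref{varia1} and \ref{varia2}; and finally verify that the two momentum mappings agree. The key observation that makes the extension from hyperkähler to hamiltonian holomorphic symplectic Kähler manifolds essentially formal is that Propositions \ref{varia1} and \ref{varia2} are already stated without any hyperkähler hypothesis.

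First I would apply the holomorphic slice theorem \cite[\S 2.7 Theorem p.~292]{MR1274117}, \cite[Theorem 1.12 p.~100]{MR1314032} at $p$ (whose stabilizer $H_p$ is reductive by \cite[\S 2.2]{MR1274117}) to obtain a $G$-equivariant biholomorphism
\begin{equation*}
\Psi \colon U'_0 \longrightarrow U_0
\end{equation*}
from a $G$-saturated open connected neighborhood $U'_0$ of the zero section $Z \cong G/H_p$ in $E_p$ onto a $G$-saturated open neighborhood $U_0$ of $p$ in $M$, with $\Psi([e,0,0]) = p$. The slice representation can be arranged so that the $V_p$ provided by the slice theorem agrees with the infinitesimal holomorphic symplectic slice $V_p$ defined via the hermitian complement. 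Since $M^{\Rr}$ is open and $G$-invariant by Proposition \ref{openness} and contains $p$, replacing $U_0$ by $U_0 \cap M^{\Rr}$ and adjusting $U'_0$ accordingly preserves $G$-saturation.

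Second I would align holomorphic symplectic forms. Set $\eta_\CC = \Psi^*\omega_\CC$, a $G$-invariant holomorphic symplectic form on $U'_0$. Since $\mu_\CC$ is $G$-equivariant with $\mu_\CC(p) = 0$ and $0$ is a fixed point of the coadjoint action, $\mu_\CC$ vanishes identically along $G \cdot p$; hence its differential vanishes along tangent directions to this orbit, and the momentum identity $\iota_{X_\xi}\omega_\CC = d\langle\mu_\CC,\xi\rangle$ forces $G \cdot p$ to be isotropic for $\omega_\CC$, so $Z$ is isotropic for $\eta_\CC$. As $Z$ is also isotropic for $\omega_{E_p}$ by the discussion in Subsection \ref{geometry}, Proposition \ref{varia1} yields a $G$-equivariant biholomorphism $\chi \colon E_p \to E_p$ with $\chi^*\eta_\CC$ and $\omega_{E_p}$ coinciding on $Z$, and Proposition \ref{varia2} then supplies a $G$-equivariant biholomorphism $\vartheta$ between $G$-invariant open neighborhoods of $Z$ satisfying $\vartheta^*\chi^*\eta_\CC = \omega_{E_p}$ and $\vartheta|_Z = \Id_Z$. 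Restricting the composite $\Psi \circ \chi \circ \vartheta$ to a connected $G$-saturated open neighborhood $U'_p$ of $Z$ inside the common domain produces a $G$-equivariant holomorphic symplectomorphism onto a $G$-saturated open neighborhood $U_p$ of $p$ in $M^{\Rr}$, sending $[e,0,0]$ to $p$.

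Third I would match the momentum mappings. Both $(\Psi\chi\vartheta)^*\mu_\CC$ and the restriction of $\kappa_p$ from \eqref{algmom} are $G$-equivariant holomorphic momentum mappings for $(U'_p,\omega_{E_p})$. Their difference pairs to zero with every fundamental vector field, hence is locally constant and, as $U'_p$ is connected, is a constant $c \in \gg^*$. Evaluating at $[e,0,0]$ gives $c = \kappa_p([e,0,0]) - \mu_\CC(p) = 0 - 0 = 0$, so the two mappings agree, yielding the desired isomorphism \eqref{isoham} of $G$-hamiltonian complex manifolds. The main obstacle is the alignment of the two holomorphic symplectic forms on $E_p$ near $Z$; once the isotropy of $Z$ for the pulled-back form is secured via the momentum condition, the remainder of the argument is a formal assembly of Propositions \ref{varia1} and \ref{varia2} with the slice theorem.
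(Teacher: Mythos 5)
Your proposal is correct and follows essentially the same route as the paper: holomorphic slice theorem, isotropy of the zero section, alignment of the two holomorphic symplectic forms via Propositions \ref{varia1} and \ref{varia2}, and matching of momentum mappings by uniqueness up to a (here vanishing) constant. The only step you compress is the identification of the slice representation with $\hho_p \oplus V_p$, which the paper justifies via the complex Witt--Artin decomposition of $\TT_pM$, and the passage from $G$-invariant to $G$-saturated neighborhoods, for which the paper cites \cite[Proposition 3.8]{mayrand}; neither omission affects the substance.
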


Since Mayrand merely proceeds in the hyperk\"ahler setting,
we explain the salient steps of the proof. 
The strategy of the proof is classical, see
 \cite[2.5. Proposition]{MR1127479}, \cite[Theorem 3]{MR2230091},
\cite[7.4.1 Theorem p.~282]{MR2021152},
\cite[7.5.5 Theorem p.~285]{MR2021152}.

\begin{proof}
Let $p$ be a point in $\mu_\RR^{-1}(0)\cap \mu_\CC^{-1}(0)$, and recall
$\gg p = \TT_p(G \cdot p) \cong \gg/\hh_p \cong \mm_p$, cf. 
$\S$ \ref{geometry} 
for the notation.
The symplectic polar $V_p^{\omega_\CC}\subseteq \TT_pM$
of $V_p$  is an $H_p$-re\-pre\-sentation
 and, for some Lagrangian complement $W_p$ of $\gg p$ in $V^{\omega_\CC}$,
necessarily an $H$-representation,
\begin{equation}
V_p^{\omega_\CC} = W_p \oplus \gg p  
\end{equation}
as $H_p$-representations in such a way that
the map
\begin{equation}
\vartheta \colon W_p \longrightarrow (\gg p)^*,\ 
(\vartheta(Y))(X) = \omega_\CC(X,Y),\ 
X \in \gg p,\ Y \in W_p
\end{equation}
is an $H_p$-linear isomorphism.
Thus the resulting decomposition
\begin{equation}
\TT_pM \cong \mm_p \oplus \mm_p^* \oplus V_p 
\cong \mm_p \oplus \hho_p \oplus V_p
\label{result1}
\end{equation}
is a complex {\em Witt-Artin\/}
decomposition (relative to the symplectic structure
and momentum mapping), cf., e.g., \cite[7.1.1 Theorem]{MR2021152}
for the real case;
with regard to the tautological symplectic structure on
$\mm_p \oplus \mm_p^*$, 
the decomposition \eqref{result1} 
is one of complex symplectic $H_p$-representations.

The $G$-orbit $G\cdot p \subseteq M$ of $p$ in $M$
is a complex submanifold of $M$  and,
in view of the holomorphic slice theorem
\cite[\S 2.7 Theorem p.~292]{MR1274117},
there is a locally closed $H_p$-invariant complex
submanifold $S_p$ such that
the canonical $G$-equivariant holomorphic map
\begin{equation}
G \times_{H_p} S_p \longrightarrow G S_p
\end{equation}
is a $G$-equivariant
biholomorphism onto the open $G$-invariant $G$-saturated neighborhood
$G S_p$ of the $G$-orbit $G\cdot p$ of $p$ in $M$.
To establish the claim,
it suffices to argue in terms of
$G \times_{H_p} S_p$, that is,
near the point $p$, we can take $M$ to be 
$G \times_{H_p} S_p$.

By construction, the injection $S_p \subseteq M$ induces,
via the decomposition \eqref{result1}, an isomorphism
$\TT_p S_p \to \hho \oplus V_p$ and,
in view of the decomposition \eqref{result1}, the complex vector space 
$W_p=\TT_pM/T_p(G\cdot p) \cong \hho_p \oplus V_p$
serves as an ordinary infinitesimal holomorphic 
(beware: not symplectic)  slice 
at $p$ for the $G$-action on $M$.
Hence parametrizing $S_p$ holomorphically by its tangent space 
$\hho \oplus V_p$ at $p$
yields a local biholomorphism
between
$E_p=G\times_{H_p}(\hho_p \oplus V_p)$ 
and
$G \times_{H_p} S_p$
near the point $p$,
 that is,
there is an open $G$-invariant
neighborhood $U$ of $p$ in $M^{\mu_\RR-ss}$, an
open $G$-invariant
neighborhood $U'$ 
in 
$G\times_{H_p}(\hho_p \oplus V_p)$ of 
the image $Z_p\cong G/H_p$ of the zero section of the vector bundle 
$G\times_{H_p}(\hho_p \oplus V_p) \to G/H_p$, and a $G$-equivariant
biholomorphism $\Psi\colon U' \to U$ 
mapping the point $[1,0,0]$ 
of $E_p=G\times_{H_p}(\hho_p \oplus V_p)$ which
the point $(1,0,0)$ 
of $G\times (\hho_p \oplus V_p)$
represents to $p$.
By \cite[Proposition 3.8]{mayrand}, every $G$-invariant neighborhood of $p$
contains a $G$-saturated neighborhood of $p$.
Hence we may take $U$ and $U'$ to be $G$-saturated   in 
$M^{\mu_\RR-ss}$. 

Now, the complex algebraic manifold 
$E_p=G\times_{H_p}(\hho_p \oplus V_p)$
carries the algebraic $G$-invariant symplectic structure
$\omega_{E_p}$ and
the  $G$-invariant 
holomorphic symplectic structure $\eta_\CC =\Psi^*(\omega_\CC)$,
and the zero section $G/H_p \to E_p$ is an isotropic embedding for both.
While the restrictions
of $\omega_{E_p}$ 
and $\eta_\CC$ to $G/H_p$ need not coincide,
Propositions \ref{varia1} and \ref{varia2}
imply that
there are  open $G$-invariant neighborhoods 
$U_0$ and $U_1$
of the image $Z$ of the zero section in $E$ and a $G$-equivariant
biholomorphism $\rho \colon U_0 \to U_1$ such that
$\rho^*(\eta_\CC) = \omega_{E_p}$, and we may take
$U_0$ and $U_1$ to be $G$-saturated.
Shrinking the open neighborhoods if need be
and combining $\rho$ and $\Psi$
yields the isomorphism \eqref{isoham}.
Compatibility with the momentum mappings is a consequence of the
fact that a momentum mapping is unique up to a constant value in the center.
\end{proof}

\begin{rema}
{\rm
For an algebraic hamiltonian action of a reductive group $G$
on a non-singular affine symplectic variety, \cite[Theorem 3]{MR2230091}---Losev calls it
a \lq\lq symplectic slice theorem\rq\rq---establishes an analytical equivalence  
at an arbitrary point having closed orbit
between a saturated neighborhood of that point and
a saturated neighborhood of the corresponding point of a model space
of the kind $G\times_H(\hho \oplus V)$. 
The reader will notice there is no auxiliary K\"ahler form 
of the kind $\omega_\RR$ (cf. Subsection \ref{hsr} above) or $\sigma_V$
(cf. \S 3.3.3 above)
present in
\cite[Theorem~3]{MR2230091}.
}
\end{rema}

\subsection{Globalization}

To spell out global versions of
Theorems \ref{holopois2}, \ref{hyper1}, \ref{orbit} and \ref{hyper2},
as before, let $G$ be a complex reductive Lie group and
$K$ a maximal compact subgroup.
\begin{rema}
\label{independent}
{\rm
It is important to note that, in Theorems \ref{orbit2},
\ref{hyper4}, and \ref{hyper3} below
the complex analytic and the  holomorphic Poisson structures
are independent of the stratifications 
(orbit type decompositions)
and, in fact,
can be understood independently
of the corresponding 
orbit type decomposition;
 in each case, the orbit type decomposition
being a
stratification is
an additional piece of structure.
}
\end{rema}

\begin{thm}
\label{orbit2}
Let $(M,\omega_\CC,\mu_\CC)$
be a $G$-hamiltonian 
holomorphic symplectic manifold endowed with, furthermore, a
 $K$-hamiltonion K\"ahler structure
 $(M,\omega_\RR,\mu_\RR)$.
Then the complex structure of $M$
determines, on the
reduced space 
\begin{equation}
M_0= (\mu_\RR^{-1}(0) \cap\mu_\CC^{-1}(0)) /K \cong \mu_\CC^{-1}(0)^{\mu_\RR-\mathrm ss}//G,
\end{equation}
cf. {\rm \eqref{anaquot}},
a complex analytic structure
$\OO_{M_0}$, and 
the holomorphic symplectic form $\omega_\CC$ induces
a holomorphic Poisson bracket $\pbra_{M_0}$
on $\OO_{M_0}$. Moreover,
the  orbit type decomposition of $M_0$ 
is a complex Whitney stratification and,
relative to that stratification,
$(M_0,\OO_{M_0},\pbra_{M_0})$ 
is a stratified holomorphic symplectic space.
\end{thm}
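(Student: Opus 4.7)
The plan is to globalize Theorems \ref{holopois2} and \ref{orbit} by using the holomorphic symplectic slice theorem (Theorem \ref{hololoc}) as a local normal form at every point of $\mu_\RR^{-1}(0)\cap\mu_\CC^{-1}(0)$. First, for each such $p$, Theorem \ref{hololoc} provides a $G$-saturated neighborhood $U_p$ of $p$ in $M^{\Rr}$ and a $G$-equivariant isomorphism of $G$-hamiltonian complex manifolds between $U_p$ and a $G$-saturated neighborhood $U'_p$ of the zero section in $E_p=G\times_{H_p}(\hho_p\times V_p)$. Passing to the reduced spaces as in $\S$ \ref{hsr} (via the topological model $\mu^{-1}(0)/K$), this induces a homeomorphism from a neighborhood of $[p]$ in $M_0$ onto a neighborhood of $[0]$ in the linear symplectic quotient $V_{p,0}=V_p//H_p$. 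Through this homeomorphism I would transport the complex analytic structure $\OO_{V_{p,0}}$ constructed in $\S$ \ref{compans} together with the holomorphic Poisson bracket $\pbra_{V_{p,0}}$ furnished by Theorem \ref{holopois2}, thereby obtaining local candidate data for $(\OO_{M_0},\pbra_{M_0})$ near $[p]$.

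To see that these local data are canonical and glue, I would give an intrinsic description of $\OO_{M_0}$ and $\pbra_{M_0}$, modeled on Propositions \ref{top}, \ref{epi} and \ref{holopois}. Writing $\pi\colon \mu_\CC^{-1}(0)^{\Rr}\to M_0$ for the quotient map, declare a continuous complex-valued function $f$ on an open subset $U\subseteq M_0$ to be holomorphic if there is a $G$-saturated open subset $W$ of $M^{\Rr}$ with $\pi^{-1}(U)=\mu_\CC^{-1}(0)\cap W$ together with a $G$-invariant holomorphic function on $W$ whose restriction to $\pi^{-1}(U)$ equals $f\circ\pi$, and define the bracket of two such functions to be the one inherited from $\omega_\CC$ exactly as in the proof of Proposition \ref{holopois}. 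This intrinsic description involves no choice of slice. Applying Theorem \ref{hololoc} together with Propositions \ref{top}, \ref{epi}, \ref{holopois} to the linear model $E_p$ then shows that, near each $[p]$, the intrinsic description agrees with the structure transported from $V_{p,0}$; in particular the local data are independent of the chosen slice and of the chosen $U_p$, and they glue on overlaps to yield globally defined $(\OO_{M_0},\pbra_{M_0})$.

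For the stratification, by Theorem \ref{orbit} the orbit-type decomposition of each $V_{p,0}$ is a complex Whitney stratification with respect to which $(V_{p,0},\OO_{V_{p,0}},\pbra_{V_{p,0}})$ is a stratified holomorphic symplectic space. Since the isomorphism of Theorem \ref{hololoc} is $G$-equivariant and identifies the stabilizer of the class of $p$ with $H_p$, it matches the $G$-orbit-type decomposition of a neighborhood of $[p]$ in $M_0$ with the $H_p$-orbit-type decomposition of a neighborhood of $[0]$ in $V_{p,0}$. As Whitney regularity and the stratified-holomorphic-symplectic condition are both local properties, assembling the local pictures yields the structure on $M_0$ claimed in the theorem. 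The main obstacle I anticipate is precisely the consistency of this local-to-global passage: the slice $V_p$ and the isomorphism of Theorem \ref{hololoc} are unique only up to $G$-equivariant biholomorphisms matching $\omega_\CC$ and $\mu_\CC$ modulo a central constant, so an explicit change-of-slice computation would be laborious. The intrinsic characterization above is designed to sidestep this obstacle, reducing every compatibility question to a single invocation of Theorem \ref{hololoc} combined with the extension statement of Proposition \ref{epi}.
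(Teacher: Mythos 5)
Your proposal is correct and follows essentially the same route as the paper: the paper's own proof simply declares Theorem \ref{orbit2} a straightforward consequence of the holomorphic symplectic slice theorem (Theorem \ref{hololoc}), states that the local constructions globalize, and refers to Mayrand for the Whitney condition, leaving the details — which you have supplied, including the intrinsic sheaf-theoretic description that makes the gluing canonical — to the reader.
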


In terms of the notation
\begin{equation*}
\mu_M^{-1}(0) = \mu_\RR^{-1}(0) \cap\mu_\CC^{-1}(0),
M_{0,K} = \mu_\RR^{-1}(0)/K,
M_{0,G} =\mu_\CC^{-1}(0)^{\mu_\RR-\mathrm ss}//G,
\end{equation*}
diagram \eqref{topodiag}
globalizes to the commutative diagram:
\begin{equation}
\begin{gathered}
\xymatrix{
  & \mu_\RR^{-1}(0) \ar@{->}[rr]^{\subseteq}\ar@{->>}'[d][dd]
 & & \phantom{aaaa} M^{\mu_\RR-\mathrm ss} \ar@{->>}[dd]
\\
\mu_M^{-1}(0) \ar@{->}[ur]^{\subseteq}
\ar@{->}[rr]_{\phantom{aaaaaaa}\subseteq}\ar@{->>}[dd]
 & &  \mu_\CC^{-1}(0)^{\mu_\RR-\mathrm ss}\ar@{->}[ur]^{\subseteq}\ar@{->>}[dd]
\\
 & M_{0,K} \ar@{->}'[r]_{\phantom {aaaa}\cong}[rr]
 & & 
M//G                                          
 \\            
M_0 \ar@{->}[rr]_{\cong}\ar@{>->}[ur]
 & &   M_{0,G} \ar@{>->}[ur]
}
\end{gathered}
\label{topodiag4}
\end{equation}

\begin{proof}
This is a straightforward consequence of Theorem \ref{hololoc}.
Indeed, the constructions and arguments 
given there globalize in an obvious fashion.
That the stratification is a complex Whitney stratification
is due to Mayrand \cite[Theorem 1.4]{mayrand}.
His reasoning for the hyperk\"ahler case 
extends to the present more general case.
We leave the details to the reader.
\end{proof}

\begin{rema}
{\rm
In Theorem \ref{orbit2}, there is no piece of structure
on $M_0$ which the real K\"ahler form $\omega_\RR$ induces.
In the presence of more structure on $M$, 
Theorems \ref{hyper4} and \ref{hyper3}
show in particular that
$\omega_\RR$ then induces
a stratified K\"ahler structure on $M_0$.
}
\end{rema}

For the application
in Section \ref{twana}, Theorem \ref{orbit2} suffices.
However, the following results are
worth spelling out:
Let
$(M,\mathsf I, \mathsf J,\mathsf K,
\omega_{\mathsf I}, 
\omega_{\mathsf J},
\omega_{\mathsf K},
\mu_{\mathsf I},\mu_{\mathsf J}, 
\mu_{\mathsf K})$
be a $K$-trihamiltonian hyperk\"ahler manifold,
write $\mu^{-1}(0) =\mu_{\mathsf I}^{-1}(0)
\cap
\mu_{\mathsf J}^{-1}(0)
\cap 
\mu_{\mathsf K}^{-1}(0)
$,
and consider the hyperk\"ahler quotient 
$M_0= \mu^{-1}(0) /K$.
Let $C^\infty(M_0)$
denote the image, under restriction, of
$C^\infty(M)^K$
in the algebra of continuous functions on $M_0$.

\begin{thm}
\label{poisson}
The three K\"ahler forms 
$\omega_{\mathsf I}, 
\omega_{\mathsf J},
\omega_{\mathsf K}$
induce 
three Poisson structures $\pbra_{\mathsf I,0}$, 
$\pbra_{\mathsf J,0}$,  $\pbra_{\mathsf K,0}$
on $C^\infty (M_0)$
that constitute
a stratified Poisson hyperk\"ahler structure. 
\end{thm}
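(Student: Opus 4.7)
The plan is to apply Theorem \ref{orbit2} once for each of the three complex structures $\mathsf I$, $\mathsf J$, $\mathsf K$, and to identify the stratumwise hyperk\"ahler structure via the linear model of Theorem \ref{hyper1} together with the holomorphic symplectic slice theorem (Theorem \ref{hololoc}). For each $\mathsf I$, I would view $(M,\mathsf I)$ as a $G$-hamiltonian holomorphic symplectic K\"ahler manifold, with $G=K^\CC$, real K\"ahler form $\omega_\RR = \omega_{\mathsf I}$ and momentum mapping $\mu_\RR = \mu_{\mathsf I}$, holomorphic symplectic form $\omega_\CC = \omega_{\mathsf J} + i\omega_{\mathsf K}$ and holomorphic momentum mapping $\mu_\CC = \mu_{\mathsf J} + i\mu_{\mathsf K}$; in every case the common zero locus coincides with $\mu^{-1}(0)$, so Theorem \ref{orbit2} identifies $M_0$ with the relevant analytic quotient and equips it with a complex analytic structure $\OO_{\mathsf I}$.

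For each $\mathsf I$, I would construct the real Poisson bracket $\pbra_{\mathsf I,0}$ on $C^\infty(M_0)$ by the Sjamaar-Lerman recipe: for $f,g \in C^\infty(M_0)$, choose $K$-invariant smooth extensions $\tilde f, \tilde g$ on a $K$-saturated neighborhood of $\mu^{-1}(0)$, set $\pbra_{\mathsf I,0}(f,g)$ equal to the descent of $\{\tilde f, \tilde g\}_{\mathsf I}|_{\mu^{-1}(0)}$ to $M_0$, and verify well-definedness. The latter amounts to showing that the ideal $\mathcal I \subseteq C^\infty(M)^K$ of $K$-invariant smooth functions vanishing on $\mu^{-1}(0)$ is a Poisson ideal under each of the brackets $\pbra_{\mathsf I}$, $\pbra_{\mathsf J}$, $\pbra_{\mathsf K}$; note that only the bracket matched to its own K\"ahler form enjoys the moment-map ideal property automatically, so the other two cases require a genuine argument.

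To establish this Poisson ideal property, I would reduce to the linear hyperk\"ahler model via a hyperk\"ahler refinement of the slice theorem: applying Theorem \ref{hololoc} for each complex structure and using that the symplectic slice $V_p$ at a point $p \in \mu^{-1}(0)$ inherits all three complex structures together with the restricted metric, one identifies a $K$-saturated neighborhood of $K\cdot p$ in $M$ with a neighborhood of the zero section of $G\times_{H_p}(\hho_p \times V_p)$ as $K$-hamiltonian hyperk\"ahler manifolds. On the linear model $V_0 = \Phi^{-1}(0)/L$, the three real Poisson brackets arise by ordinary Sjamaar-Lerman reduction applied to each $\omega_{\mathsf I}$, and the Poisson ideal property follows from \cite[Theorem 1 p.~35]{MR1123275} combined with the Flato-Guillemin radical theorem \cite{MR618321}, transferred to $M_0$ via the slice isomorphism.

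On each orbit-type stratum of $M_0$, the three resulting brackets come from reduced symplectic forms that combine with the reduced Riemannian metric into an ordinary hyperk\"ahler structure, as already established in the linear case by Theorem \ref{hyper1}. The main obstacle I anticipate is making the three $G$-equivariant normal forms from Theorem \ref{hololoc} coherent, so that they can be simultaneously transported to a single hyperk\"ahler normal form on $G\times_{H_p}(\hho_p \times V_p)$; this requires an iterated Darboux-Weinstein argument handling all three K\"ahler forms at once, in the spirit of Propositions \ref{varia1} and \ref{varia2}, exploiting the fact that all three momentum mappings vanish at $p$ and share the same stabilizer $H_p$. Once this coherent local model is in place, the three defining properties of a stratified Poisson hyperk\"ahler space follow directly from Theorems \ref{hyper1} and \ref{hyper2} applied stratum by stratum.
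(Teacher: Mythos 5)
There is a genuine gap, and it comes in two places. First, your entire scaffolding rests on Theorem \ref{orbit2} and Theorem \ref{hololoc}, both of which require a $G$-hamiltonian \emph{holomorphic} symplectic structure, i.e.\ that the $K$-action integrate to a holomorphic $G=K^{\CC}$-action relative to the complex structure in question. Theorem \ref{poisson} makes no such hypothesis: it is stated for a $K$-trihamiltonian hyperk\"ahler manifold with only the compact group acting, and its conclusion is deliberately confined to \emph{real} Poisson structures on $C^\infty(M_0)$ (Definition~(1) of a stratified Poisson hyperk\"ahler space involves no complex analytic structure, and the accompanying remark even weakens ``stratified'' to the sense of \cite{MR572580}). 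Integrability is precisely the extra hypothesis of Theorems \ref{hyper4} and \ref{hyper3}; assuming it here proves a different, weaker statement. Second, the ``coherent hyperk\"ahler normal form'' you flag as the main obstacle is not just an obstacle but unavailable: as Theorem \ref{hyper4} records, transporting $\mathsf J$, $\mathsf K$ and $\omega_{\mathsf I}$ to the local model adapted to $\mathsf I$ produces \emph{not necessarily linear} complex structures and K\"ahler form, so there is no simultaneous linearization to which Theorem \ref{hyper1} could be applied for all three brackets at once. (Your appeal to \cite{MR618321} is also misplaced for $C^\infty$ functions --- Gabber's theorem is about polynomial ideals, and the paper itself only ``presumably'' applies it in the algebraic setting.)

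The paper's own proof avoids all of this machinery. It quotes Dancer--Swann \cite[Theorem 2.1]{MR1468352} to get, directly and without any complexified group action, a hyperk\"ahler structure on each orbit-type piece of $M_0=\mu^{-1}(0)/K$; it then \emph{defines} $\{f,h\}_{\mathsf I,0}(q)$ pointwise as the symplectic Poisson bracket on the stratum through $q$, so well-definedness is automatic and no Poisson-ideal lemma is needed as input. The only thing to check is that the resulting function lies in $C^\infty(M_0)$, which follows by the Sjamaar--Lerman descent argument: choose $K$-invariant smooth extensions $\widehat f,\widehat h$ on $M$, observe that $\{\widehat f,\widehat h\}_{\mathsf I}$ is $K$-invariant, and verify stratum by stratum that its restriction to $\mu^{-1}(0)$ descends to the pointwise-defined bracket. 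The Poisson-ideal property you wanted to prove first is then a \emph{corollary}, as the paper notes in the remark following the proof. Finally, the stratification claim is delegated to \cite[Theorem 1.2]{mayrand}. If you want to salvage your outline, reverse the logical order --- define the brackets stratumwise via Dancer--Swann and prove descent, rather than defining them by descent and proving an ideal property via a local model you cannot linearize.
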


\begin{rema}
{\rm
In Theorem \ref{poisson},
the term \lq\lq stratified\rq\rq\ refers to a notion of stratification
in the sense of \cite{MR572580}, 
weaker than that of a Whitney stratification.
 }
\end{rema}

\begin{proof}
By \cite[Theorem 2.1]{MR1468352},
on each piece of the orbit type decomposition,
the three K\"ahler forms 
$\omega_{\mathsf I}, 
\omega_{\mathsf J},
\omega_{\mathsf K}$
induce a hyperk\"ahler structure.
On a stratum, let
$\mathsf I_0$, $\mathsf J_0$,
$\mathsf K_0$
denote the corresponding complex structures and
$\omega_{\mathsf I,0}, 
\omega_{\mathsf J,0},
\omega_{\mathsf K,0}$
the corresponding K\"ahler forms.

To construct the Poisson structures,
we adapt the
pointwise reasoning
in \cite[3.1 Proposition]{MR1127479},
cf. Remark \ref{realset}, to the present situation as follows:

Let $f,h$ be in $C^\infty(M_0)$ and
let $q$ be a point of $M_0$.
The point $q$ lies in a unique orbit type piece $S_q$ of the orbit 
type decomposition of
$M_0$,
a hyperk\"ahler manifold, so take
\begin{equation}
\{f,h\}_{\mathsf I,0}(q) = \{f,h\}_{\mathsf I,0,S_q}(q)\ 
(\omega_{\mathsf I,0}\text{-symplectic\ Poisson\ bracket\ in\ } 
C^\infty(S_q)).
\end{equation}
It then remains to prove that
$\{f,h\}_{\mathsf I,0}$ is a member of $C^\infty (M_0)$.

By construction, 
there are $K$-invariant smooth functions
$\widehat f$ and $\widehat h$ on $M$
rendering the diagrams 
\begin{equation*}
\begin{CD}
\mu^{-1}(0)
@>>> M
\\
@VVV
@VV{\widehat f}V
\\
M_0 @>>f> \RR
\end{CD}
\quad \quad
\begin{CD}
\mu^{-1}(0)
@>>> M
\\
@VVV
@VV{\widehat h}V
\\
M_0 @>>h> \RR
\end{CD}
\end{equation*}
commutative.

The ordinary $\omega_{\mathsf I}$-symplectic Poisson bracket
$\pbra_{\mathsf I}$ on $C^\infty(M)$ is $K$-invariant.
Hence the smooth function
$\{\widehat f, \widehat h\}_{\mathsf I}$ on $M$ is $K$-invariant.
This function renders the diagram
\begin{equation*}
\begin{CD}
\mu^{-1}(0)
@>>> M
\\
@VVV
@VV{\{\widehat f, \widehat h\}_{\mathsf I}}V
\\
M_0 @>>{\{f, h\}_{\mathsf I,0}}> \RR
\end{CD}
\end{equation*}
commutative.

Repeating the argument with
regard to $\mathsf J$ 
and $\mathsf K$ yields the Poisson brackets 
$\pbra_{\mathsf J,0}$ and  $\pbra_{\mathsf K,0}$, respectively.

By \cite[Theorem 1.2]{mayrand},
the orbit type decomposition  is a stratification
in the sense of \cite{MR572580}.
\end{proof}

\begin{rema}
{\rm
The reasoning in the above proof shows that
the ideal of $K$-invariant functions
in $C^{\infty}(M)^K$
which vanish on 
$\mu^{-1}(0)$ is  Poisson ideal in
$C^{\infty}(M)^K$ relative to each of the Poisson structures
$\pbra_{\mathsf I}$,
$\pbra_{\mathsf J}$,
$\pbra_{\mathsf K}$
 on $C^\infty(M)$ 
associated with, respectively,
$\omega_{\mathsf I}, 
\omega_{\mathsf J},
\omega_{\mathsf K}$.
It would be interesting to establish this fact
by extending the argument for
\cite[Theorem 1 p.~35]{MR1123275},
cf. Remark \ref{realset}.
}
\end{rema}

Combining Theorems \ref{hololoc},  \ref{orbit2} 
and \ref{poisson}
leads to the following.

\begin{thm}
\label{hyper4}
Suppose that the
$K$-action integrates to a holomorphic
$G$-action relative to $\mathsf I$.
Then $\omega_{\mathsf I}$ induces
a stratified K\"ahler structure $(C^\infty(M_0), \OO_{M_0},\pbra_\RR)$ on
$M_0$,
cf. Proposition {\rm \ref{holopois}} and Remark {\rm \ref{realset}}.
Furthermore, this stratified K\"ahler structure combines with
the stratified holomorphic symplectic structure
$(\OO_{M_0},\pbra_{M_0})$ which $\mathsf I$
and $\omega_{\mathsf J} +i\omega_{\mathsf K}$ together with
$\omega_{\mathsf I}$,
in view of Theorem {\rm \ref{orbit2}}, determine,
  to
a weak stratified hyperk\"ahler structure
on $M_0$ relative to
the orbit type  stratification of $M_0$.
Moreover, this stratification is a complex  Whitney stratification
relative to $\OO_{\mathsf I}$.
Finally,
on the local model in Theorem {\rm \ref{hololoc}},
more precisely,
on the left-hand side $(U'_p,\omega_{E_p},\kappa_p)$ of {\rm \eqref{isoham}},
the other pieces of structure
$\mathsf J$, $\mathsf K$ 
and $\omega_{\mathsf I}$
on $M$ induce
not necessarily linear
complex structures and a K\"ahler form
that turn the local model
into a $\group$-trihamiltonian hyperk\"ahler manifold. \qed
\end{thm}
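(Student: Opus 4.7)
The plan is to deduce each assertion by assembling ingredients already in place: Theorem~\ref{orbit2} (complex analytic structure and holomorphic Poisson bracket), Theorem~\ref{poisson} (stratified Poisson hyperk\"ahler structure on $C^\infty(M_0)$), Theorem~\ref{hololoc} (local model), and the Dancer--Swann type result \cite[Theorem 2.1]{MR1468352} (each orbit type piece of $M_0$ is a hyperk\"ahler manifold). The assumption that the $K$-action integrates to a holomorphic $G$-action relative to $\mathsf I$ is what lets us view $(M,\mathsf I,\omega_{\mathsf I},\mu_{\mathsf I};\,\omega_{\mathsf J}+i\omega_{\mathsf K},\mu_{\mathsf J}+i\mu_{\mathsf K})$ as a $G$-hamiltonian holomorphic symplectic K\"ahler manifold as required by the hypotheses of Theorem~\ref{orbit2}.

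First I would invoke Theorem~\ref{orbit2} with $\omega_\CC=\omega_{\mathsf J}+i\omega_{\mathsf K}$, $\mu_\CC=\mu_{\mathsf J}+i\mu_{\mathsf K}$, $\omega_\RR=\omega_{\mathsf I}$, $\mu_\RR=\mu_{\mathsf I}$. This produces on $M_0$ the complex analytic structure $\OO_{M_0}=\OO_{\mathsf I}$, the holomorphic Poisson bracket $\pbra_{M_0}$, and the complex Whitney stratification by orbit types, yielding the third assertion directly. Next I would apply Theorem~\ref{poisson} to the underlying trihamiltonian hyperk\"ahler manifold to obtain the three real Poisson structures $\pbra_{\mathsf I,0},\pbra_{\mathsf J,0},\pbra_{\mathsf K,0}$ on $C^\infty(M_0)$; the one relevant for the stratified K\"ahler statement is $\pbra_\RR:=\pbra_{\mathsf I,0}$.

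To promote $(C^\infty(M_0),\OO_{M_0},\pbra_\RR)$ to a stratified K\"ahler space, two compatibility conditions must be checked. On each orbit type stratum $S\subseteq M_0$, the Dancer--Swann hyperk\"ahler reduction endows $S$ with a hyperk\"ahler structure whose $\mathsf I_0$-K\"ahler form is precisely the symplectic form associated to $\pbra_{\mathsf I,0}|_S$ and whose underlying complex structure agrees with the one induced by $\OO_{M_0}|_S$; this gives compatibility on strata. For the sheaf-theoretic compatibility---that locally every section of $\OO_{M_0}$ is the restriction of a $C^\infty(M_0,\CC)$-function---I would argue in the local model $(U'_p,\omega_{E_p},\kappa_p)$ of Theorem~\ref{hololoc}: at a point $[e,0,0]$ the analytic quotient reduces to a linear model $V_p^0=\Phi_{V_p}^{-1}(0)//H_p$, and Proposition~\ref{epi} (the averaged-extension argument for holomorphic $H_p$-invariants) gives that every holomorphic germ on $V_p^0$ extends to an $H_p$-invariant holomorphic function on an $H_p$-saturated open in $V_p$, which in particular lies in $C^\infty$; transporting back to $M$ via the $G$-equivariant biholomorphism \eqref{isoham} and averaging over $K$ gives the required smooth extension. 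The weak stratified hyperk\"ahler structure is then obtained by combining the stratified K\"ahler datum $(C^\infty(M_0),\OO_{\mathsf I},\pbra_\RR)$ with the stratified holomorphic Poisson datum $(\OO_{\mathsf I},\pbra_{M_0})$ already produced; on each stratum these reassemble the hyperk\"ahler structure of Dancer--Swann, which is precisely the definition of a weak stratified hyperk\"ahler space.

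The final assertion, concerning the local model, is essentially a transport-of-structure: the isomorphism \eqref{isoham} of $G$-hamiltonian complex manifolds is a $G$-equivariant biholomorphism of $G$-saturated open sets, and in particular it is $K$-equivariant between $K$-invariant open sets; pulling back $\mathsf J,\mathsf K,\omega_{\mathsf I}$ from $U_p\subseteq M$ to $U'_p\subseteq E_p$ along the inverse of \eqref{isoham} furnishes the additional hyperk\"ahler ingredients on $U'_p$, which together with the algebraic $\omega_{E_p}=\omega_{\mathsf J}+i\omega_{\mathsf K}$ constitute a $K$-trihamiltonian hyperk\"ahler structure (the corresponding momentum mappings are identified up to a central constant, as in the proof of Theorem~\ref{hololoc}). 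I expect the main obstacle to be the sheaf-level compatibility condition (i) for stratified K\"ahler spaces, namely controlling the extension of holomorphic germs to $K$-invariant smooth functions across orbit types; this is where Proposition~\ref{epi} combined with the $G$-saturated slice furnished by Theorem~\ref{hololoc} is essential, and where the hypothesis that the $K$-action integrates to $G$ is really used, because otherwise $\OO_{M_0}$ would not be defined in the first place.
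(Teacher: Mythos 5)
Your proposal is correct and takes essentially the same route as the paper, which establishes Theorem \ref{hyper4} simply by combining Theorems \ref{hololoc}, \ref{orbit2} and \ref{poisson} (the theorem is stated with a \textit{qed} and no further argument). You have merely made explicit the verifications the paper leaves implicit: the identification $\omega_\RR=\omega_{\mathsf I}$, $\omega_\CC=\omega_{\mathsf J}+i\omega_{\mathsf K}$ feeding into Theorem \ref{orbit2}, the Dancer--Swann result on each stratum, Proposition \ref{epi} for the sheaf-level compatibility, and transport of structure along \eqref{isoham} for the final assertion about the local model.
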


Repeating the reasoning for
Theorem \ref{hyper4}
with regard to the complex structures 
$\mathsf J$ and
$\mathsf K$
leads to the following, which is
\cite[Corollary 3.1.10]{maxence2019a}.

\begin{thm}
\label{hyper3}
Suppose that the 
$K$-action integrates to  holomorphic
$K^{\mathbb C}$-actions relative to each of
$\mathsf I$, $\mathsf J$ and $\mathsf K$.
Then the hyperk\"ahler structure 
induces
 three complex analytic structures $\OO_{\mathsf I}$,
$\OO_{\mathsf J}$, $\OO_{\mathsf K}$
and three pairwise compatible real Poisson structures
$\pbra_{\mathsf I}$,
$\pbra_{\mathsf J}$, $\pbra_{\mathsf K}$
on $M_0$
such that
\begin{equation}
\left(\OO_{\mathsf I},\pbra_{\mathsf J}+i\pbra_{\mathsf K}\right),\  
\left(\OO_{\mathsf J},\pbra_{\mathsf K}+i\pbra_{\mathsf I}\right),\  
\left(\OO_{\mathsf K},\pbra_{\mathsf I}+i\pbra_{\mathsf J}\right)
\label{holopoissong}
\end{equation}
are holomorphic Poisson structures.
These generate a sphere of  holomorphic Poisson structures on $M_0$.
Moreover,  the orbit type 
decomposition of $M_0$ is a complex Whitney stratification
relative to each of $\OO_{\mathsf I}$,
$\OO_{\mathsf J}$, $\OO_{\mathsf K}$, and the 
three holomorphic Poisson structures constitute
 a stratified hyperk\"ahler structure on $M_0$. \qed
\end{thm}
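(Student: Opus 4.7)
The plan is to invoke Theorem \ref{hyper4} three times, once for each of the distinguished complex structures $\mathsf I$, $\mathsf J$, $\mathsf K$. The hypothesis that the $K$-action integrates to a holomorphic $K^{\mathbb C}$-action relative to each of the three complex structures means that, for any cyclic permutation of $(\mathsf I,\mathsf J,\mathsf K)$, the data
$(M,\omega_{\mathsf I},\omega_{\mathsf J}+i\omega_{\mathsf K},\mu_{\mathsf I},\mu_{\mathsf J}+i\mu_{\mathsf K})$
(and cyclic permutations thereof) fit the hypotheses of Theorem \ref{hyper4}. Applying that theorem yields $\OO_{\mathsf I}$, a real Poisson bracket $\pbra_{\mathsf I}$ coming from the reduced stratified K\"ahler structure associated with $\omega_{\mathsf I}$, and a holomorphic Poisson structure $\pbra_{\mathsf J}+i\pbra_{\mathsf K}$ on $\OO_{\mathsf I}$, together with the fact that the orbit type decomposition of $M_0$ is a complex Whitney stratification relative to $\OO_{\mathsf I}$. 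The cyclic repetitions produce the remaining data in \eqref{holopoissong} and the Whitney property relative to $\OO_{\mathsf J}$ and $\OO_{\mathsf K}$.

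What remains is to verify that the three triples are pairwise compatible and fit the definition of a stratified hyperk\"ahler space, and that together they span a sphere of holomorphic Poisson structures. Pairwise compatibility of the real brackets $\pbra_{\mathsf I},\pbra_{\mathsf J},\pbra_{\mathsf K}$ can be checked stratumwise: by \cite[Theorem 2.1]{MR1468352} (already invoked in the proof of Theorem \ref{poisson}), each piece of the orbit type decomposition inherits an ordinary hyperk\"ahler structure, and on such a piece the three reduced brackets coincide with the Poisson brackets of the three K\"ahler forms of that hyperk\"ahler structure, which are pairwise compatible by the linear algebra of hyperk\"ahler metrics. This compatibility propagates from the strata to $C^\infty(M_0)$ by the same invariant-extension device used in the proof of Theorem \ref{poisson}, and to the sheaves $\OO_{\mathsf I}$, $\OO_{\mathsf J}$, $\OO_{\mathsf K}$ by the epimorphism statement in Proposition \ref{epi} combined with the local model of Section \ref{hsr}. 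The sphere statement then follows by noting that for any unit vector $(a,b,c)\in S^2$ the rotated datum $(a\mathsf I+b\mathsf J+c\mathsf K,\,\text{rotated K\"ahler form})$ again satisfies the hypotheses of Theorem \ref{hyper4} and produces the corresponding linear combination of the three generators.

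The main obstacle is passing from stratumwise hyperk\"ahler identities to sheaf-level identities relating $\OO_{\mathsf I},\OO_{\mathsf J},\OO_{\mathsf K}$ and the real brackets, so that \eqref{holopoissong} really are holomorphic Poisson brackets globally rather than merely on strata. This is where the holomorphic symplectic slice theorem, Theorem \ref{hololoc}, becomes essential: in the local model $E_p=G\times_{H_p}(\hho_p\oplus V_p)$ the full hyperk\"ahler data concentrate on the symplectic normal space $V_p$, where $H_p$ acts linearly preserving a flat hyperk\"ahler structure, so Theorem \ref{hyper1} (applied pointwise to the normal slice) supplies the linear hyperk\"ahler reduction which assembles the three holomorphic Poisson structures \eqref{holopoisson} locally. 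Globalizing via Theorem \ref{hololoc} as in the proof of Theorem \ref{orbit2} and invoking Theorem \ref{hyper4} for the Whitney property in each complex structure, one obtains the stratified hyperk\"ahler structure claimed, completing the proof.
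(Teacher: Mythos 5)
Your proposal follows essentially the same route as the paper, whose entire argument for Theorem \ref{hyper3} is the single sentence ``Repeating the reasoning for Theorem \ref{hyper4} with regard to the complex structures $\mathsf J$ and $\mathsf K$ leads to the following'': your three cyclic applications of Theorem \ref{hyper4}, with pairwise compatibility checked stratumwise via \cite[Theorem 2.1]{MR1468352} and assembled through the local model of Theorems \ref{hololoc} and \ref{hyper1}, are exactly the details the paper leaves implicit. The one place you argue slightly differently is the sphere statement---integrability of the $K$-action relative to $a\mathsf I+b\mathsf J+c\mathsf K$ for arbitrary unit $(a,b,c)$ does not formally follow from integrability relative to $\mathsf I$, $\mathsf J$, $\mathsf K$ alone, whereas the paper treats the sphere as generated algebraically by the three brackets---but this is a cosmetic deviation rather than a gap.
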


\begin{rema}
{\rm
Theorem \ref{orbit2} together with Theorem \ref{hololoc} extends, in a sense,
 \cite[Theorem 1.4]{mayrand} given there in the realm of 
hyperk\"ahler manifolds
to holomorphic symplectic K\"ahler manifolds
but offers a weaker conclusion, however:
The strata in Theorem \ref{orbit2} are holomorphic
symplectic manifolds
whereas those in 
 \cite[Theorem 1.4]{mayrand} are hyperk\"ahler.
Theorem \ref{hyper4} together with Theorem \ref{hololoc} 
essentially recovers
 \cite[Theorem 1.4]{mayrand}.
}
\end{rema}

\begin{rema}
{\rm
For $Y \in \lieal$, let $Y_M$ denote the induced smooth vector field
on $M$.
The $\group$-action on $M$ is integrable for the complex structure $\mathsf I$,
i.e., extends to a holomorphic $\cgroup$-action on $M$,
if and only if, for $Y \in \lieal$, the smooth vector field $\mathsf I\, Y_M$
on $M$ is complete.
Thus,
for $M$ compact, the $K$-action is integrable for any complex structure, 
cf., e.g., \cite[Theorem 4.4]{MR664118},
and the 
three holomorphic Poisson structures in Theorem \ref{hyper3} constitute
 a stratified hyperk\"ahler structure on $M_0$.
}
\end{rema}

\section{Twisted algebraic representation varieties}
\label{tar}
Retain the notation of Section \ref{grcoho}.
The $\cgroup$-subspace  $\Hom(\pi,\cgroup)$ of 
$\Hom(F,\cgroup) \cong \cgroup^{2\ell}$
is Zariski-closed and hence an affine $\cgroup$-variety.
By definition, the affine categorical quotient 
$\Hom(\pi,\cgroup)//\cgroup$
is the affine variety having
$\CC[\Hom(\pi,\cgroup)]^\cgroup$
as its coordinate ring,  and
we take the {\em algebraic representation variety\/}
$\Rep_\alg(\pi, \cgroup)$
associated with $\pi$ and $\cgroup$ 
to be this quotient; cf., e.g.,
\cite[Proposition 6.1 p.~11]{MR1320603}.
By general principles, the projection
$\pi\colon \Hom(\pi,\cgroup) \to \Rep_\alg(\pi, \cgroup)$
is a $\cgroup$-reduction in the sense of
Subsection \ref{quotients}, cf., e.g., \cite[Section 3]{MR1044583}.
The closed $\cgroup$-orbits are the semisimple representations,
the quotient
$\Rep_\alg(\pi, \cgroup)$ parametrizes the 
closed $\cgroup$-orbits, i.e., the semisimple representations,
and
each $\cgroup$-orbit in  $\Hom(\pi,\cgroup)$ 
has its {\em semisimplification\/}
as the unique closed $\cgroup$-orbit
in its closure \cite{MR233371}.
The injection
$\Hom^{\ssi}(\pi,\cgroup)\subseteq 
\Hom(\pi,\cgroup)$
induces a homeomorphism
\begin{equation}
 \Hom^{\ssi}(\pi,\cgroup)/ \cgroup\longrightarrow \Rep_\alg(\pi, \cgroup)
\label{homeo1}
\end{equation}
from the space of $\cgroup$-orbits
in the subspace $\Hom^{\ssi}(\pi,\cgroup)$
of
semisimple representations in 
$\Hom(\pi,\cgroup)$ onto $\Rep_\alg(\pi, \cgroup)$.
These facts hold for both the Zariski and the classical (metric) topology.
In the terminology of
\cite {MR1307297, MR1320603}, $\Rep_\alg(\pi, \cgroup)$ is the ordinary Betti moduli space;
in \cite[Section 6 p.~11/12]{MR1320603}, Simpson proceeds 
more generally for the fundamental group of a K\"ahler manifold
but this need not concern us here.
The {\em nonabelian Hodge theorem\/}
establishes, among others, for $\cgroup = \mathrm{GL}(r,\CC)$,
 a homeomorphism
between the moduli space of semistable 
rank $r$
topologically trivial Higgs bundles
and $\Rep_\alg(\pi, \cgroup)$ over the surface $\Sigma$.
This goes back to
 \cite{MR887284} for the case of rank two Higgs bundles
and to
\cite {MR1307297, MR1320603} for the general case
(in particular for the fundamental group of an arbitrary K\"ahler manifold).
Suitably rephrased, this correspondence extends to arbitrary
complex reductive Lie groups of the kind $\cgroup$ under discussion.

A classical topological construction 
provides the means to recover the case of topologically non-trivial
bundles. Atiyah-Bott discuss this in detail for connected $K$
\cite[Section 6]{MR702806}; 
see also
\cite[Section 5]{MR1370113}, \cite[Section 3]{MR3836789}: 
Let $N$ denote the normal closure
of $r$ in $F$. Consider the quotient group $\Gamma = F\slash [F,N]$.
The image $[r] \in \Gamma$ of $r$ 
generates the central subgroup 
$\mathbb Z\langle [r]\rangle =N\slash [F,N]$ of $\Gamma$,
 and
the resulting extension
\begin{equation}
\begin{CD}
0
@>>>
\mathbb Z\langle [r]\rangle
@>>>
\Gamma
@>>>
\pi
@>>>
1
\end{CD}
\label{3.2}
\end{equation}
is central.
Since the transgression homomorphism
$\Ho_2(\pi) \to \mathbb Z\langle [r]\rangle$
is an isomorphism, the extension \eqref{3.2}
is a maximal stem extension (Schur cover) and since,
furthermore, the abelianization of $\pi$ 
is a free abelian group,
that maximal stem extension is unique
up to within isomorphism \cite[\S 9.9 Theorem 5 p.~214]{MR0279200}.
Atiyah and Bott use the terminology 
\lq\lq universal central extension\rq\rq\ 
to refer to this situation
\cite[\S 6]{MR702806}.

Let $X$ be a member of the center $\zz$ of $\lieal$
such  that
$\exp(X)$ lies in the center of $K$.
When $K$ is connected, 
$\exp(X)$ lies in the center of $K$
for any $X\in \zz$.
The canonical surjection $F \to \Gamma$
induces an injection  
$\Hom(\Gamma,\cgroup) \subseteq \Hom(F,\cgroup)$,
and this injection identifies 
a certain subspace
 of $ \Hom(\Gamma,\cgroup)$ with 
the subspace $r^{-1}(\mathrm{exp}(X))$, if non-empty,
of  $\Hom(F,\cgroup)$.
Thus, suppose  $r^{-1}(\mathrm{exp}(X))$ non-empty.
We then denote that subspace of $ \Hom(\Gamma,\cgroup)$
by $\Hom_{X}(\Gamma,\cgroup)$.
The member $X$ of the center $\zz$
recovers a topological characteristic class of a corresponding bundle.
See 
\cite[\S 6]{MR702806},
\cite[Proposition 3.1]{MR3836789} for details.
We take the {\em twisted algebraic 
representation variety $\Rep_{X,\alg}(\pi, \cgroup)$ associated to\/}
$X \in \zz$
to be the corresponding affine categorical quotient.
The homeomorphism \eqref{homeo1} generalizes to a homeomorphism
\begin{equation}
\Hom^{\ssi}_X(\Gamma,\cgroup)// \cgroup
\longrightarrow
\Rep_{X,\alg}(\pi, \cgroup) . 
\label{homeo2}
\end{equation}
The nonabelian Hodge correspondence extends to that case
and recovers all topological types of Higgs bundles on $\Sigma$.

\section{Twisted analytic representation varieties as
stratified holomorphic symplectic spaces}
\label{twana}

Let
 $\group$ be a maximal compact subgroup 
so that $\cgroup$ is the complexification $\group^{\mathbb C}$ 
of $\group$.
Endow the Lie algebra $\lieal$ of $\group$ with an invariant
inner product. Left trivialization, the polar decomposition of
$\cgroup=\group^{\mathbb C}$ and the inner product on $\lieal$ induce a
diffeomorphism
\begin{equation}
\TT^*\group \stackrel{\cong} \longrightarrow\TT\group \longrightarrow  \group
\times \lieal \longrightarrow \group^{\mathbb C} = \cgroup
\label{polar}
\end{equation}
compatible with $K$-left and right translation
in such a way that the complex structure on $\group^{\mathbb C}$
and the cotangent bundle symplectic structure on $\mathrm
T^*\group$ combine to a $\group$-bi-invariant K\"ahler structure
on $\cgroup$.
In \cite{kronheimer}, Kronheimer claims this without proof;
a proof is in \cite{MR1892462}.
Moreover, the cotangent bundle
momentum mappings for left and right translation
combine, relative to the K\"ahler form, to a momentum mapping 
$\mu_{\mathrm{cot}}\colon \cgroup \to \lieal^*$
for the $\group$-action on $\cgroup$ by conjugation in
$\cgroup$, and
 the inner product on $\lieal$
induces a non-degenerate $\CC$-valued invariant 
symmetric bilinear
form $\cdot$ on $\clieal$.
Taking the product structure, we obtain a
K\"ahler form $\omega_\RR$ on $\cgroup^{2 \ell}$
and, relative to the
diagonal $\group$-action
on $\cgroup^{2 \ell}$, the action on each copy of $\cgroup$
being by conjugation in
$\cgroup$, 
 a $\group$-momentum mapping
$\mu_\RR \colon \cgroup^{2 \ell} \longrightarrow \lieal^*$.
Thus $(\cgroup^{2\ell},\omega_\RR, \mu_\RR)$
is a $\group$-hamiltonian K\"ahler manifold.

In diagram \eqref{PB},
substitute, for the open $\cgroup$-invariant subset $O$ of $\clieal$,
an open neighborhood in $\clieal$ of $0 \in \clieal$ where the
exponential map is a biholomorphism onto an open neighborhood of 
the neutral element
$e$ of $\cgroup$. Then the restriction
$\eta \colon \mathcal M(\mathcal P,\cgroup) \to \cgroup^{2\ell}$
is a biholomorphism
onto a $\cgroup$-invariant open neighborhood of 
$\Hom(\pi,\cgroup) = r^{-1}(e) \subseteq  \cgroup^{2\ell}$,
and the $\group$-hamiltonian K\"ahler structure on
$(\cgroup^{2\ell},\omega_\RR, \mu_\RR)$
induces a 
 $\group$-hamiltonian K\"ahler structure 
$(\omega_\RR, \mu_\RR)$ on
$\mathcal M(\mathcal P,\cgroup)$;
here we slightly abuse the notation
$\omega_\RR$ and $\mu_\RR$.
Let $\omega_\CC = \omega_{c,\mathcal P}$, cf. \eqref{closed},
and 
$\mu_\CC= \mu_{c,\mathcal P}$, cf. \eqref{eq},
 and, in terms of the notation and terminology in Subsection \ref{hsr},
define the {\em analytic representation variety\/}
associated with $\pi$ and $\cgroup$ to be
the holomorphic symplectic quotient
\begin{equation}
\Rep_\an(\pi, \cgroup)=\mathcal M(\mathcal P,\cgroup)//_{\mu_\CC} \cgroup \cong
\mu^{-1}(0)/K.
\end{equation}
Likewise, let $X$ be a member of  the center $\zz$ 
of $\lieal$ such that
$\mu^{-1}_\CC(X)$ is non-empty. 
In diagram \eqref{PB},
substitute, for the open $\cgroup$-invariant subset $O$ of $\clieal$,
an open neighborhood in $\clieal$ of $X \in \clieal$ where the
exponential map is a biholomorphism onto an open neighborhood $\widehat O$ of 
$\exp(X) \in \cgroup$. As before, the restriction
$\eta \colon \mathcal M(\mathcal P,\cgroup) \to \cgroup^{2\ell}$
is a biholomorphism
onto a $\cgroup$-invariant open neighborhood 
$\widehat  {\mathcal M}(\mathcal P,\cgroup)$ in $\cgroup^{2\ell}$
of 
$\Hom_X(\Gamma,\cgroup) = r^{-1}(\exp(X) ) \subseteq  \cgroup^{2\ell}$,
and the $\group$-hamiltonian K\"ahler structure on
$(\cgroup^{2\ell},\omega_\RR, \mu_\RR)$
induces a 
 $\group$-hamiltonian K\"ahler structure 
$(\omega_\RR, \mu_\RR)$ on
$\mathcal M(\mathcal P,\cgroup)$;
here again we slightly abuse the notation
$\omega_\RR$ and $\mu_\RR$.
As before, let $\omega_\CC = \omega_{c,\mathcal P}$, cf. \eqref{closed},
and 
$\mu_\CC= \mu_{c,\mathcal P}$, cf. \eqref{eq},
in terms of the notation and terminology in Subsection \ref{hsr},
let
\begin{align*}
\mu^{-1}_\CC(X)^{\Rr}&=
\mu^{-1}_\CC(X)\cap M^{\Rr},
\end{align*}
and 
define the {\em twisted 
analytic representation variety\/}
associated with $\pi$, $\cgroup$, and $X$ to be
the holomorphic symplectic quotient
\begin{equation}
\Rep_{X,\an}(\pi, \cgroup)=
\mu^{-1}_\CC(X)^{\Rr}
// \cgroup \cong (\mu_\RR^{-1}(0) \cap\mu_\CC^{-1}(X))/K.
\end{equation}
Then 
$\Rep_{0,\an}(\pi, \cgroup)=\Rep_\an(\pi, \cgroup)$.
Theorem \ref{orbit2} implies the following.

\begin{thm}
\label{main}
The complex Lie group $\cgroup$, 
the invariant inner product on $\lieal$,
and the choice of
$X\in \zz$ 
 such that
$\exp(X)$ lies in the center of $K$
and
$\mu^{-1}_\CC(X)$ is non-empty
determine a stratified holomorphic symplectic structure
on the twisted analytic representation variety $\Rep_{X,\an}(\pi, \cgroup)$.
The stratification is a complex Whitney stratification.
\qed
\end{thm}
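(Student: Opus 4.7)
The strategy is to realize Theorem~\ref{main} as a direct application of the globalized reduction result Theorem~\ref{orbit2}. First I would assemble the candidate $\cgroup$-hamiltonian data: take $M$ to be the open $\cgroup$-invariant neighborhood $\mathcal M(\mathcal P,\cgroup)$ of $r^{-1}(\exp(X))$ obtained by choosing, in the pullback diagram \eqref{PB}, for $O$ an open $\cgroup$-invariant neighborhood of $X$ in $\clieal$ on which $\exp$ is a biholomorphism onto $\widehat O\subseteq \cgroup$. On $M$ the holomorphic symplectic form $\omega_\CC=\omega_{c,\mathcal P}$ of \eqref{closed} and the holomorphic momentum mapping $\mu_\CC=\mu_{c,\mathcal P}$ of \eqref{eq} are already supplied by the group-cohomology construction of Section~\ref{grcoho}. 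The $\group$-hamiltonian K\"ahler structure $(\omega_\RR,\mu_\RR)$ is inherited from $\cgroup^{2\ell}$ via the biholomorphism $\eta$: the polar-decomposition K\"ahler form on $\cgroup$ coming from \eqref{polar} and the invariant inner product on $\lieal$, together with the associated $\group$-conjugation momentum mapping, assemble over the $2\ell$ factors into the required K\"ahler/hamiltonian structure, as spelled out at the beginning of Section~\ref{twana}.

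Next I would perform a $\cgroup$-equivariant shift so that the fiber of interest becomes the zero fiber. Because $X\in\zz$, its image $\psi(X)\in\clieal^*$ is fixed under the coadjoint action of $\cgroup$, so $\widetilde\mu_\CC:=\mu_\CC-\psi(X)$ is still a $\cgroup$-equivariant holomorphic momentum mapping for $\omega_\CC$; by hypothesis $\widetilde\mu_\CC^{-1}(0)=\mu_\CC^{-1}(X)$ is non-empty. Thus the reduced space produced by Theorem~\ref{orbit2}, applied to $(M,\omega_\CC,\widetilde\mu_\CC,\omega_\RR,\mu_\RR)$, is
\begin{equation*}
M_0\;=\;\bigl(\mu_\RR^{-1}(0)\cap\widetilde\mu_\CC^{-1}(0)\bigr)\big/\group\;=\;\bigl(\mu_\RR^{-1}(0)\cap\mu_\CC^{-1}(X)\bigr)\big/\group\;=\;\Rep_{X,\an}(\pi,\cgroup),
\end{equation*}
and Theorem~\ref{orbit2} furnishes at a single stroke the reduced complex analytic structure $\OO_{M_0}$, the holomorphic Poisson bracket $\pbra_{M_0}$ on $\OO_{M_0}$, the orbit-type decomposition as a complex Whitney stratification, and the verification that $(M_0,\OO_{M_0},\pbra_{M_0})$ is a stratified holomorphic symplectic space.

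The dependence asserted in Theorem~\ref{main} is then manifest from the construction: the complex structure of $\cgroup$ determines $\omega_\CC$ and $\mu_\CC$ through Section~\ref{grcoho}; the invariant inner product controls both the isomorphism $\psi\colon\clieal\to\clieal^*$ appearing in \eqref{eq} and the polar-decomposition K\"ahler form of \eqref{polar}, and hence both the shift and the auxiliary $(\omega_\RR,\mu_\RR)$; finally $X$ specifies the fiber on which reduction is performed. The only delicate verification I anticipate is to confirm that the hypotheses of Theorem~\ref{orbit2} and of the preceding Theorem~\ref{hololoc} genuinely apply on the open piece $\widehat{\mathcal M}(\mathcal P,\cgroup)$: stabilizers of points in $\mu_\RR^{-1}(0)\cap\mu_\CC^{-1}(X)$ must be reductive, so that the holomorphic slice theorem applies, and the $\mu_\RR$-semistable locus on which the analytic quotient is formed must contain $\mu_\CC^{-1}(X)\cap\mu_\RR^{-1}(0)$. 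The former follows from the reductivity observation of \cite{MR1274117} recalled at the start of the \lq\lq Local structure\rq\rq\ subsection, and the latter is exactly the content of Proposition~\ref{openness} together with the definition \eqref{semistable} of $M^{\Rr}$. Once these are in place no further work is needed, and Theorem~\ref{main} is immediate. \qed
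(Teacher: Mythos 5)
Your proposal is correct and follows essentially the same route as the paper: Section \ref{twana} assembles exactly the data you describe (the holomorphic symplectic pair $(\omega_{c,\mathcal P},\mu_{c,\mathcal P})$ from the group-cohomology construction, the auxiliary $\group$-hamiltonian K\"ahler structure transported from $\cgroup^{2\ell}$ via $\eta$ and the polar decomposition), and the paper then invokes Theorem \ref{orbit2} directly, which is why the statement carries a \qed. Your explicit central shift $\widetilde\mu_\CC=\mu_\CC-\psi(X)$ and the check that stabilizers are reductive and that $\mu_\RR^{-1}(0)\cap\mu_\CC^{-1}(X)$ lies in the semistable locus are points the paper leaves implicit, so your write-up is, if anything, slightly more careful than the original.
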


\begin{rema}
{\rm
The  stratified
holomorphic symplectic structure 
is independent of any complex structure on $\Sigma$.
}
\end{rema}

\begin{rema}
{\rm
Let $\varphi \colon \Gamma \to G$ be a representation which lies in
$\mu_\RR^{-1}(0) \cap\mu_\CC^{-1}(X)$.
Then $\varphi$ determines
a $\pi$-module structure on $\gg$, and we denote this $\pi$-module
by $\gg_\varphi$.
One can show that right translation identifies 
an infinitesimal holomorphic symplectic slice
at $\varphi$ as a point of $\mathcal M(\mathcal P,\cgroup)$
with $\Ho^1(\pi,\gg_\varphi) \cong \Ho^1(\Sigma,\gg_\varphi)$.
In particular, at a regular point $[\varphi]$
of $\Rep_{X,\an}(\pi, \cgroup)$,
a choice of representative $\varphi$ in $[\varphi]$
induces an isomorphism from 
$\Ho^1(\Sigma,\gg_\varphi)$ to the tangent space
$\TT_{[\varphi]}(\Rep_{X,\an}(\pi, \cgroup))$
to $\Rep_{X,\an}(\pi, \cgroup)$ at the point $[\varphi]$.
This kind of observation goes back to
\cite{MR0169956}.

Let $[\pi]\in \Ho_2(\pi,\ZZ) \cong \ZZ$
denote a fundamental class (generator).
Consider a general point $\varphi$ in
$\mu_\RR^{-1}(0) \cap\mu_\CC^{-1}(X)$.
The stabilizer $H_\varphi \subseteq \cgroup$ acts linearly on
$\Ho^1(\pi,\gg_\varphi)$,
the pairing
\begin{equation}
\omega_\varphi\colon \Ho^1(\pi,\gg_\varphi) \otimes \Ho^1(\pi,\gg_\varphi)
\stackrel{\cdot\,\circ\, \cup} \longrightarrow \Ho^2(\pi,\CC)
\stackrel {\cap [\pi]}\longrightarrow \CC
\end{equation}
is skew and, in view of Poincar\'e duality, nondegenerate, i.e.,
a symplectic structure, necessarily $H_\varphi$-invariant.
Moreover, 
\begin{equation}
\mu_\varphi\colon \Ho^1(\pi,\gg_\varphi)
\stackrel{\cup \circ \bra} \longrightarrow \Ho^2(\pi,\gg_\varphi)
\stackrel {\cong}\longrightarrow \hh_\varphi^*
\end{equation}
recovers the associated momentum mapping having the value zero at the origin
\cite{MR762512}.
The resulting symplectic quotient 
$\Ho^1(\pi,\gg_\varphi)//H_\varphi$
is a local model for 
$\Rep_{X,\an}(\pi, \cgroup)$
near $[\varphi]$ as a stratified holomorphic symplectic space.
We can interpret this by saying that
$\Ho^1(\pi,\gg_\varphi)//H_\varphi$ 
yields generalized analytic 
Darboux coordinates for $\Rep_{X,\an}(\pi, \cgroup)$
near $[\varphi]$.
In particular, at a regular point  $[\varphi]$, this yields
ordinary holomorphic Darboux coordinates for $\Rep_{X,\an}(\pi, \cgroup)$
near $[\varphi]$.
See \cite{MR1938554} and the literature there for the corresponding
spaces of representations in a real Lie group.

Endow the (real) surface  $\Sigma$ with  a complex structure.
Using the corresponding
Hodge decomposition,
one can put a complex structure on 
$\Ho^1(\Sigma,\gg_\varphi)$
distinct from that coming from the complex structure of $\gg$.
As $\varphi$ varies, 
one can, perhaps, in this way recover the requisite complex
analytic  and  holomorphic Poisson structures and
prove that an analytic representation variety 
of the kind
$\Rep_{X,\an}(\pi, \cgroup)$
acquires a stratified hyperk\"ahler structure
which in particular recovers the  hyperk\"ahler structure
on the top stratum built in \cite{MR887284}.
}
\end{rema}

\section{Comparison of the twisted 
analytic and algebraic representation varieties}

\begin{thm}
Let $X$ be a member of  the center $\zz$ 
of $\lieal$ such that
$\exp(X)$ lies in the center of $K$ and that
$\mu^{-1}_\CC(X)$ is non-empty.
The holomorphic map 
$\eta \colon \mathcal M(\mathcal P,\cgroup) \to \cgroup^{2\ell}$
induces an analytic isomorphism
\begin{equation}
\Rep_{X,\an}(\pi, \cgroup) \longrightarrow \Rep_{X,\alg}(\pi, \cgroup).
\end{equation}
\end{thm}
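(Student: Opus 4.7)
The plan is to reduce the claim to the fact that, for an affine variety acted on by a complex reductive Lie group, the affine GIT quotient and the Kempf--Ness analytic quotient agree as analytic spaces. The key geometric input is that $\eta$ is essentially the identity on the relevant level set.

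First I would verify that $\eta$ restricts to a $\cgroup$-equivariant biholomorphism
\[
\eta \colon \mu_\CC^{-1}(X) \longrightarrow \Hom_X(\Gamma,\cgroup).
\]
Indeed, by construction $\eta$ is a biholomorphism from $\mathcal M(\mathcal P,\cgroup)$ onto the $\cgroup$-invariant open neighborhood $\widehat{\mathcal M}(\mathcal P,\cgroup)$ of $\Hom_X(\Gamma,\cgroup) = r^{-1}(\exp(X))$ in $\cgroup^{2\ell}$. Since $O$ was chosen so that $\exp$ biholomorphically identifies a neighborhood of $X$ in $\clieal$ with a neighborhood $\widehat O$ of $\exp(X)$ in $\cgroup$, the pullback diagram \eqref{PB} yields $\eta^{-1}\bigl(\Hom_X(\Gamma,\cgroup)\bigr) = r_O^{-1}(X) = \mu_\CC^{-1}(X)$. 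Equivariance is built into the construction, and the restrictions of $\omega_\RR$ and $\mu_\RR$ on both sides are intertwined by $\eta$ simply because they are defined by pullback from $\cgroup^{2\ell}$.

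Second, I would read off the two candidate quotients. The subvariety $\Hom_X(\Gamma,\cgroup) \subseteq \cgroup^{2\ell}$ is Zariski-closed, hence affine, and carries an algebraic $\cgroup$-action whose affine GIT quotient is, by definition, $\Rep_{X,\alg}(\pi,\cgroup)$. For an affine $\cgroup$-variety, property (A) of Subsection \ref{quotients} holds, every $\cgroup$-orbit closure meets $\mu_\RR^{-1}(0)$, and hence
\[
\mu_\CC^{-1}(X)^{\Rr} = \mu_\CC^{-1}(X)
\]
under the identification in the previous paragraph. Thus by Proposition \ref{openness}, applied to the K\"ahler manifold $\mathcal M(\mathcal P,\cgroup)$ with its $\cgroup$-invariant subset $\mu_\CC^{-1}(X) \cong \Hom_X(\Gamma,\cgroup)$, the analytic quotient $\Rep_{X,\an}(\pi,\cgroup)$ is the Kempf--Ness quotient of this affine $\cgroup$-variety, and likewise \eqref{homeo2} says that $\Rep_{X,\alg}(\pi,\cgroup)$ is realized topologically by the same orbit space $(\mu_\RR^{-1}(0)\cap \mu_\CC^{-1}(X))/K$.

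Third, the analytic structures match. The classical comparison theorem for analytic and algebraic Hilbert quotients of affine $\cgroup$-varieties (Heinzner--Loose \cite{MR1274117}; see also the analytic Kempf--Ness theory used in Subsection \ref{hsr}) asserts that the analytification of $\mathrm{Spec}(\CC[Y]^\cgroup)$ is canonically the analytic quotient $Y^{\Rr}/\!/\cgroup$ for any affine $\cgroup$-variety $Y$. Applying this to $Y = \Hom_X(\Gamma,\cgroup)$ and transporting along the $\cgroup$-equivariant biholomorphism $\eta$ of the first paragraph produces the asserted analytic isomorphism $\Rep_{X,\an}(\pi,\cgroup) \to \Rep_{X,\alg}(\pi,\cgroup)$ induced by $\eta$.

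The main (and essentially only) subtlety is the last step: to be sure the \emph{analytic structure} on $\Rep_{X,\an}$ defined through the Kempf--Ness procedure in Subsection \ref{hsr} agrees with that coming from the affine categorical quotient. This is where one has to invoke the analytic GIT comparison rather than just a bijection of point sets; concretely, for any open $U \subseteq \Rep_{X,\alg}$, one checks that $\cgroup$-invariant holomorphic functions on the preimage of $U$ in $\Hom_X(\Gamma,\cgroup)$ can be averaged by the maximal compact $K$ (as in \S\ref{compans}) to produce the requisite invariant local sections, matching the sheaves of both quotients.
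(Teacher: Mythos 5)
Your proposal is correct and follows essentially the same route as the paper: restrict $\eta$ to a biholomorphism $\mu_\CC^{-1}(X)\to\Hom_X(\Gamma,\cgroup)$, identify both quotients with the common orbit space $(\mu_\RR^{-1}(0)\cap\mu_\CC^{-1}(X))/\group$ via Kempf--Ness theory on the algebraic and analytic sides respectively, and conclude. Your explicit final step comparing the two analytic structures (via $K$-averaging of invariant holomorphic functions) is a point the paper's own proof leaves implicit in its closing ``This implies the claim,'' so it is a welcome addition rather than a deviation.
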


Thus the  twisted analytic representation varieties
recover the Betti moduli spaces as
analytic spaces.

\begin{proof}
The diagram
\begin{equation}
\begin{CD}
\mu^{-1}_\CC(X)
@>{\subseteq}>>
\mathcal M(\mathcal P,\cgroup)
@>r_O>> O 
\\
@V{\eta_|}VV
@V{\eta}VV
@VV{\mathrm{exp}}V
\\
\Hom_{X}(\Gamma,\cgroup)
@>{\subseteq}>>
\widehat {\mathcal M}(\mathcal P,\cgroup)
@>>r> \widehat O 
\end{CD}
\end{equation}
is commutative, $\exp \colon O \to \widehat O$
and $\eta$ are biholomorphisms,
and  $\eta$  restricts to an isomorphism
$\eta_|\colon \mu^{-1}_\CC(X) \to \Hom_{X}(\Gamma,\cgroup)$
of analytic sets.

Consider the momentum mapping $\mu_\RR \colon \cgroup^{2\ell} \to \lieal^*$.
The zero locus $\mu_\RR^{-1}(0) \subseteq \cgroup^{2\ell}$
is a {\em Kempf-Ness\/} set (in the algebraic sense)
for the algebraic $\cgroup$-action 
on $\cgroup^{2\ell}$,
and
$\mu_\RR^{-1}(0)\cap \Hom_{X}(\Gamma,\cgroup)$
is a Kempf-Ness set 
for the algebraic $\cgroup$-action 
on $\Hom_{X}(\Gamma,\cgroup)$.
Hence the injection of
${\mu_\RR^{-1}(0)\cap \Hom_{X}(\Gamma,\cgroup)}$ into $\Hom_{X}(\Gamma,\cgroup)$
induces a homeomorphism 
\begin{equation}
(\mu_\RR^{-1}(0)\cap \Hom_{X}(\Gamma,\cgroup))/\group
 \longrightarrow \Hom_{X}(\Gamma,\cgroup)//\cgroup =
\Rep_{X,\alg}(\pi, \cgroup) .
\end{equation}
See, e.g., \cite{MR1040861} for details.

Likewise, relative to the momentum mapping
$\mathcal M(\mathcal P,\cgroup) \stackrel{\eta} \longrightarrow 
 \cgroup^{2\ell} \stackrel{\mu_\RR}\longrightarrow \lieal^*$---above
we also used the notation $\mu_\RR$ for it---,
the zero locus 
$(\mu_\RR \circ \eta)^{-1}(0) \subseteq \mathcal M(\mathcal P,\cgroup)$
is a {\em Kempf-Ness\/} set (in the analytic sense)
for the analytic $\cgroup$-action 
on $\mathcal M(\mathcal P,\cgroup)$,
and
$(\mu_\RR \circ \eta)^{-1}(0) \cap \mu^{-1}_\CC(X)$
is a Kempf-Ness set 
for the analytic $\cgroup$-action 
on $\mu^{-1}_\CC(X)$.
See \cite[\S 1.2]{MR1274117}
for this notion of Kempf-Ness set.
By 
\cite[Intro \S 1.3  p.~289, \S 3.3 Theorem p.~295]{MR1274117},
the injection
\begin{equation}
(\mu_\RR \circ \eta)^{-1}(0) \cap \mu^{-1}_\CC(X)
\longrightarrow \mu^{-1}_\CC(X)
\end{equation}
induces a homeomorphism 
\begin{equation}
((\mu_\RR \circ \eta)^{-1}(0) \cap \mu^{-1}_\CC(X))/\group
 \longrightarrow 
\mu^{-1}_\CC(X)//\cgroup =
\Rep_{X,\an}(\pi, \cgroup) .
\end{equation}
However, $\eta$ also induces a homeomorphism
\begin{equation}
((\mu_\RR \circ \eta)^{-1}(0) \cap \mu^{-1}_\CC(X))/\group
 \longrightarrow 
(\mu_\RR^{-1}(0)\cap \Hom_{X}(\Gamma,\cgroup))/\group.
\end{equation}
This implies the claim.
\end{proof}

\section*{Appendix I: Self-duality equations}

The self-duality equations first arose on the complex plane and 
constructing many solutions was tricky \cite{MR737994, MR456131, MR761842}
until Hitchin managed to avoid the tricky problems with boundary conditions 
by putting the equations on a compact surface, thereby getting nice moduli spaces
\cite{MR887284}.
However solving these tricky problems with boundary conditions led  to interesting moduli
spaces, forming complete hyperk\"ahler manifolds, even in the original case of $\RR^2$. 
The reader can find more details in the reviews
\cite{MR3931781, wildramification}. 
In particular the complex representation varieties in 
the present paper admit upgradings  to 
corresponding \lq\lq wild representation
varieties\rq\rq\ 
(\lq\lq wild character varieties\rq\rq\  in the terminology of \cite{MR3931781}).

\section*{Appendix II}
We profit from the opportunity to correct a minor technical flaw in
\cite{MR1460627}.
We are indebted to Suhyoung Choi for having isolated this flaw.

The reasoning in 
\cite[p.~402]{MR1460627}
relies on an identity of the kind
\begin{equation*}
- <c, v \cup u>=<c, u \cup v> 
\end{equation*}
but there is no reason for such an identity
to be valid since $u$ and $v$ are
(parabolic) 1-cocycles on $\pi$,
and parabolicity does not entail such an identity.

To fix this problem, 
in the statement of
\cite[Key Lemma 8.4 p.~397]{MR1460627},
replace identity  (8.4.2) with
\begin{equation}
\omega_V([v], [u]) = \tfrac 12( <c, u \cup v -v \cup u> +
\sum (X_j \cdot z_j Y_j -Y_j \cdot z_j X_j)).
\end{equation}
The proof of \cite[Theorem 8.3 p.~397]{MR1460627}
 works fine with this identity.

\section*{Acknowledgements}
The late Peter Slodowy taught me
geometric invariant theory.
See in particular  \cite{MR1044582} and the literature there;
this book devotes considerable space to Luna's slice theorem.
Also Peter Slodowy tried to hire me as a colleague in his department.
I am indebted to P. Heinzner for some email discussion
regarding the proof of
Proposition \ref{openness}(1)
and to
P. Boalch for a number of comments on a draft of the paper;
in particular Appendix I is due to P. Boalch.
I gratefully acknowledge support by the CNRS and by the
Labex CEMPI (ANR-11-LABX-0007-01).


\def\cprime{$'$} \def\cprime{$'$} \def\cprime{$'$} \def\cprime{$'$}
  \def\cprime{$'$} \def\cprime{$'$} \def\cprime{$'$} \def\cprime{$'$}
  \def\dbar{\leavevmode\hbox to 0pt{\hskip.2ex \accent"16\hss}d}
  \def\cprime{$'$} \def\cprime{$'$} \def\cprime{$'$} \def\cprime{$'$}
  \def\cprime{$'$} \def\Dbar{\leavevmode\lower.6ex\hbox to 0pt{\hskip-.23ex
  \accent"16\hss}D} \def\cftil#1{\ifmmode\setbox7\hbox{$\accent"5E#1$}\else
  \setbox7\hbox{\accent"5E#1}\penalty 10000\relax\fi\raise 1\ht7
  \hbox{\lower1.15ex\hbox to 1\wd7{\hss\accent"7E\hss}}\penalty 10000
  \hskip-1\wd7\penalty 10000\box7}
  \def\cfudot#1{\ifmmode\setbox7\hbox{$\accent"5E#1$}\else
  \setbox7\hbox{\accent"5E#1}\penalty 10000\relax\fi\raise 1\ht7
  \hbox{\raise.1ex\hbox to 1\wd7{\hss.\hss}}\penalty 10000 \hskip-1\wd7\penalty
  10000\box7} \def\polhk#1{\setbox0=\hbox{#1}{\ooalign{\hidewidth
  \lower1.5ex\hbox{`}\hidewidth\crcr\unhbox0}}}
  \def\polhk#1{\setbox0=\hbox{#1}{\ooalign{\hidewidth
  \lower1.5ex\hbox{`}\hidewidth\crcr\unhbox0}}}
  \def\polhk#1{\setbox0=\hbox{#1}{\ooalign{\hidewidth
  \lower1.5ex\hbox{`}\hidewidth\crcr\unhbox0}}}

\end{document}